\numberwithin{equation}{section} \allowdisplaybreaks
\begin{document}
\newtheorem{theorem}{Theorem}[section]
\newtheorem{defin}{Definition}[section]
\newtheorem{prop}{Proposition}[section]
\newtheorem{corol}{Corollary}[section]
\newtheorem{lemma}{Lemma}[section]
\newtheorem{rem}{Remark}[section]
\newtheorem{example}{Example}[section]
%\label{} %\ref{}
\title{Characteristic Classes of Lie Algebroid Morphisms}
\author{{\small by}\vspace{2mm}\\Izu Vaisman}
\date{}
\maketitle
{\def\thefootnote{*}\footnotetext[1]%
{{\it 2000 Mathematics Subject Classification: 53D17, 57R20}.
\newline\indent{\it Key words and phrases}: Lie algebroid morphism,
secondary characteristic classes, modular class.}}
\begin{center} \begin{minipage}{12cm}
A{\footnotesize BSTRACT. We extend R. Fernandes' construction of
secondary characteristic classes of a Lie algebroid to the case of
a base-preserving morphism between two Lie algebroids. Like in the
case of a Lie algebroid, the simplest characteristic class of our
construction coincides with the modular class of the morphism.}
\end{minipage}
\end{center} \vspace*{5mm}
%\noindent
%begin{center} %\section %\end{center}
%\begin{center}
\noindent In \cite{F} R. Fernandes has constructed a sequence of
secondary characteristic classes of a Lie algebroid whose first
element coincides with the modular class. In this note we extend
Fernandes' construction and use the general definition of D. Lehmann
\cite{L} in order to produce secondary characteristic classes of a
base-preserving morphism of two Lie algebroids. In particular, like
in \cite{F}, we get a sequence of secondary characteristic classes
whose first element coincides with the modular class of the morphism
\cite{{GMM},{KW}}. We assume that the reader is familiar with Lie
algebroids and Lie-algebroid connections and will consult
\cite{{F},{Mk},{L},{V}} whenever needed. The framework of the paper
is the $C^\infty$-category. We mention that other constructions of
secondary characteristic classes of Lie algebroids may also be
found in the literature e.g., \cite{{Cr},{CF},{Kb}}. (The author
is grateful to Rui Fernandes for bringing to his attention
reference
\cite{CF}, which led to Proposition \ref{propCF} of the last
section of the paper.)
\section{Selected topics on $A$-Connections}
Let $(A,\sharp_A,[\,,\,]_A)$ be a Lie algebroid and $V$ a vector
bundle with the same base manifold $M^m$ ($m=dim\,M$). By an
$A$-connection we shall understand an $A$-covariant derivative
$\nabla:\Gamma A\times\Gamma V\rightarrow\Gamma V$ ($\Gamma$
denotes the space of cross sections of a vector bundle), written
as $(a,v)\mapsto\nabla_av$, which is $\mathds{R}$-bilinear and has
the properties
\begin{equation}\label{propconex}
\nabla_{fa}v=f\nabla_av,\;\nabla_a(fv)=f\nabla_av+\sharp_Aa(f)v
\hspace{2mm}(f\in C^\infty(M)).\end{equation}

Accordingly, the value $\nabla_av(x)$ depends only on $a(x)$ and
on $v|_{U_x}$ where $U_x$ is a neighborhood of $x\in M$. In order
to write down the local expression of $\nabla$, we take a local
basis $(b_i)_{i=1}^s$ $(s=rank\,A)$ of $\Gamma A$, with the dual
basis $(b^{*i})$ of $\Gamma A^*$ and a local basis $(w_u)_{u=1}^r$
$(r=rank\,V)$ of $\Gamma V$. Then, with the notation
$\Omega^k(A)=\Gamma\wedge^kA^*$ for the space of $A$-{\it forms}
of degree $k$ and using the Einstein summation convention, we get
\begin{equation}\label{conexloc} \nabla_{b_i}w_u=\omega_u^t(b_i)w_t,\;
\omega_u^t=\gamma_{iu}^tb^{*i}\in\Omega^1(A),\end{equation} and we
say that $(\omega_u^t)$ is the {\it local connection matrix}.
Correspondingly, the curvature
$$R_\nabla(a_1,a_2)w=\nabla_{a_1}\nabla_{a_2}w-\nabla_{a_2}\nabla_{a_1}w-
\nabla_{[a_1,a_2]_A}w$$
gets the local expression
\begin{equation}\label{curbloc}
R_\nabla(b_i,b_j)w_u=\Omega_u^t(b_i,b_j)w_t,\;
\Omega_u^t=d_A\omega_u^t-\omega_u^s\wedge\omega_s^t\in\Omega^2(A),\end{equation}
where $d_A$ denotes the $A$-exterior differential \cite{Mk}. We
will say that $(\Omega_u^t)$ is the {\it local curvature matrix}
and a change of the basis $(w_u)$ implies an
$ad(Gl(r,\mathds{R}))$-transformation of $(\Omega_u^t)$. Like in
classical differential geometry, one has the covariant derivative
machinery of $V$-tensors and tensor valued $A$-forms and the
computation of the $d_A\Omega_u^t$ produces the Bianchi identity
that may be written under the form $\nabla\Omega_u^t=0$.

In the study of characteristic classes we shall need the direct
product of two Lie algebroids $p_c:A_c\rightarrow M_c$ $(c=1,2)$
and we recall its definition given in \cite{Mk}. Consider the
pullback bundles $\pi_c^{-1}A_c$, where $\pi_c$ is the projection
of $M_1\times M_2$ on $M_c$. Identify
\begin{equation}\label{pullsections}
\Gamma(\pi_c^{-1}A_c)\equiv\{\sigma:M_1\times M_2\rightarrow
A_c\,/\,p_c\circ\sigma=\pi_c\}\end{equation} and notice that local
bases $(b_i^{(c)})$ of $\Gamma A_c$ have natural lifts to local
bases of $\Gamma(\pi_c^{-1}A_c)$, which will also be denoted by
$(b_i^{(c)})$. Take local cross sections
$$\sigma_{(c)}=\sigma_{(c)}^ib^{(c)}_i,\,
\kappa_{(c)}=\kappa_{(c)}^ib^{(c)}_i\hspace{2mm}(\sigma_{(c)}^i,
\kappa_{(c)}^i\in C^\infty(M_1\times M_2))$$ (there is no summation
on indices in parentheses) and define the following anchors and
brackets
$$\sharp_{(c)}\,\sigma_{(c)}=\sigma_{(c)}^i\sharp_{A_c}
b^{(c)}_i,[\sigma_{(c)},\kappa_{(c)}]_{(c)}=
\sigma_{(c)}^i\kappa_{(c)}^j[b^{(c)}_i,b^{(c)}_j]_{A_c}$$
$$+\{\sigma_{(c)}^j[(\iota_{*c}\sharp_{A_c}b^{(c)}_j)\kappa_{(c)}^i]
-\kappa_{(c)}^j[(\iota_{*c}\sharp_{A_c}b^{(c)}_j)\sigma_{(c)}^i]\}b^{(c)}_i,$$
where $\iota_{*c}$ is the natural injection of $TM_c$ in
$T(M_1\times M_2)$. In these operations, the $M_{(c-1\,{\rm
mod.}\,2)}$-variable is just a passive parameter and, since the
anchor and bracket of each $A_c$ are invariant, the results are
independent of the choice of the bases. Thus, the vector bundles
$\pi_c^{-1}A_c$ are Lie algebroids over $M_1\times M_2$ and the
direct product of the Lie algebroids $A_c$ is the Whitney sum
$\mathcal{A}=\pi_1^{-1}A_1\oplus
\pi_2^{-1}A_2$ endowed with the direct sum of the anchors and brackets of
the two pullbacks (in particular, $[b_i^{(1)},b_j^{(2)}]=0$).
\begin{prop}\label{conectlinks} Let $q:V\rightarrow M_1$ be a vector
bundle on $M_1$. Then, any $\mathcal{A}$-connection
$\tilde{\nabla}$ on the pullback $\pi_1^{-1}(V)$ defines a
differentiable family $\nabla^{(x_2)}$ $(x_2\in M_2)$ of
$A_1$-connections on $V$. Conversely, any $x_2$-parameterized,
differentiable family of $A_1$-connections on $V$ is induced by an
$\mathcal{A}$-connection on $\pi_1^{-1}(V)$.\end{prop}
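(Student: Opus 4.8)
The plan is to exploit the direct-sum structure $\mathcal{A}=\pi_1^{-1}A_1\oplus\pi_2^{-1}A_2$ together with the identification \eqref{pullsections}, which lets me read a section of $\pi_1^{-1}(V)$ as an $x_2$-parameterized family $s(\cdot,x_2)$ of sections of $V$, and a section of $\pi_1^{-1}A_1$ as an $x_2$-parameterized family of sections of $A_1$. For the first assertion, given $\tilde{\nabla}$ I would fix $x_2\in M_2$ and, for $a\in\Gamma A_1$ and $v\in\Gamma V$, form the natural lift $\tilde a\in\Gamma(\pi_1^{-1}A_1)\subset\Gamma\mathcal{A}$ and the $x_2$-constant lift $\tilde v\in\Gamma(\pi_1^{-1}(V))$, and then set $\nabla^{(x_2)}_av:=(\tilde{\nabla}_{\tilde a}\tilde v)|_{M_1\times\{x_2\}}$. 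The locality of $\tilde{\nabla}$ (a consequence of \eqref{propconex} applied to $\mathcal{A}$) guarantees that this restriction depends only on the slice data, so the definition makes sense.

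The key computation is that the anchor of $\mathcal{A}$, restricted to the summand $\pi_1^{-1}A_1$, acts on a pulled-back function $\pi_1^*f$ ($f\in C^\infty(M_1)$) as $(\sharp_{(1)}\tilde a)(\pi_1^*f)=\pi_1^*(\sharp_{A_1}a(f))$; this is immediate from the defining formula for $\sharp_{(1)}$ and the presence of the injection $\iota_{*1}$ of $TM_1$, which makes $\sharp_{(1)}\tilde a$ tangent to the $M_1$-factor. Granting this, both identities of \eqref{propconex} for $\nabla^{(x_2)}$ follow by restricting to $M_1\times\{x_2\}$ the corresponding identities for $\tilde{\nabla}$: $\mathds{R}$-bilinearity is clear, $\tilde{\nabla}_{\pi_1^*f\cdot\tilde a}\tilde v=\pi_1^*f\cdot\tilde{\nabla}_{\tilde a}\tilde v$ gives $\nabla^{(x_2)}_{fa}v=f\nabla^{(x_2)}_av$ (since $\pi_1^*f\cdot\tilde a$ is the lift of $fa$), and the Leibniz rule of $\tilde{\nabla}$ applied to $\pi_1^*f\cdot\tilde v$ (the lift of $fv$) produces the second identity. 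Differentiability in $x_2$ is read off the local connection matrix \eqref{conexloc}: in lifted frames $(b^{(1)}_i)$, $(w_u)$ the coefficients of $\nabla^{(x_2)}$ are just the restrictions of the smooth coefficients $\gamma^t_{iu}(x_1,x_2)$ of $\tilde{\nabla}$.

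For the converse I would reverse this in coordinates. Choosing local lifted frames as above, a smooth family $\nabla^{(x_2)}_{b^{(1)}_i}w_u=\gamma^t_{iu}(x_1,x_2)w_t$ is prescribed, and I define $\tilde{\nabla}$ on $\pi_1^{-1}(V)$ by $\tilde{\nabla}_{b^{(1)}_i}w_u=\gamma^t_{iu}w_t$ on the first summand and by the flat prescription $\tilde{\nabla}_{b^{(2)}_j}w_u=0$ on $\pi_2^{-1}A_2$, extending by \eqref{propconex}. The two points needing care are that the $A_2$-part is frame-independent and that the two prescriptions assemble into a genuine $\mathcal{A}$-connection. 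Frame-independence holds because a transition matrix $c^v_u$ for $V$ depends on $x_1$ only, while $\sharp_{(2)}b^{(2)}_j$ differentiates in the $M_2$-direction (via $\iota_{*2}$), so $(\sharp_{(2)}b^{(2)}_j)(c^v_u)=0$ and the vanishing $\tilde{\nabla}_{b^{(2)}_j}w_u=0$ is preserved. The axioms for $\tilde{\nabla}$ then follow summand by summand: on $\pi_2^{-1}A_2$ by direct differentiation of the component functions, and on $\pi_1^{-1}A_1$ by applying, slicewise in $x_2$, the $C^\infty(M_1)$-linearity and Leibniz rule of $\nabla^{(x_2)}$, using once more that $\sharp_{(1)}$ sees only the $M_1$-variable. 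By construction, lifting $a$ and $v$ and restricting recovers the given family, so this $\tilde{\nabla}$ induces $\nabla^{(x_2)}$.

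I expect the only genuine obstacle to be bookkeeping rather than conceptual: one must keep the two anchor directions cleanly separated, so that $\pi_1^*$-pulled-back functions are constant along $\sharp_{(2)}$ while the $A_1$-Leibniz rule matches the $\mathcal{A}$-Leibniz rule along $\sharp_{(1)}$. Once the role of the injections $\iota_{*c}$ is pinned down, the verifications of \eqref{propconex} are routine. (Note that the correspondence is not a bijection, since the $\pi_2^{-1}A_2$-part of $\tilde{\nabla}$ is free; the statement only asserts existence in each direction.)
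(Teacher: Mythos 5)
Your proposal is correct and follows essentially the same route as the paper: restriction of $\tilde{\nabla}$ to the slices $M_1\times\{x_2\}$ (equivalently, keeping only the $b_{(1)}^{*i}$-part of the connection matrix) for the direct statement, and the prescription $\tilde{\nabla}_{b_j^{(2)}}w_u=0$ on the second summand for the converse. You merely spell out the axiom checks and the frame-independence that the paper leaves implicit.
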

\begin{proof} Assume that we have the
covariant derivatives $\tilde{\nabla}_{\sigma^i_{(1)}b_i^{(1)}+
\sigma^j_{(2)}b_j^{(2)}}(\nu^uw_u)$, where $\sigma^i_{(c)},\nu^u$
are local, differentiable functions on $M_1\times M_2$. Then, the
required family of connections on $V$ is given by the covariant
derivatives
$$(\nabla^{(x_2)}_{\xi^ib_i^{(1)}}(\eta^uw_u))(x_1)=
(\tilde{\nabla}_{\xi^ib_i^{(1)}}(\eta^uw_u))(x_1,x_2),$$ where
$\xi^i,\eta^u$ are local, differentiable functions on $M_1$,
$x_1\in M_1,x_2\in M_2$ and we use an identification like
(\ref{pullsections}) for $V$. Notice that, if the local connection
matrices of $\tilde{\nabla}$ are
\begin{equation}\label{eqfamilie}
\tilde\omega_u^v=\gamma_{(1)iu}^v(x_1,x_2)b_{(1)}^{*i}
+\gamma_{(2)ju}^v(x_1,x_2)b_{(2)}^{*j},\end{equation} the
connection $\nabla^{(x_2)}$ has the matrices
$\gamma_{(1)iu}^v(x_1,x_2)b_{(1)}^{*i}$ with the fixed value of
$x_2$. Conversely, if the family $\nabla^{(x_2)}$ is given, we get
an $\mathcal{A}$-connection $\tilde{\nabla}$ by adding the local
equations $\tilde{\nabla}_{b_j^{(2)}}w_u=0$. The local matrices of
this connection $\tilde{\nabla}$ are the same as the matrices of
$\nabla^{(x_2)}$ where $x_2$ is allowed to vary in $M_2$.
\end{proof}

In particular, we may apply Proposition \ref{conectlinks} for
$M_1=M,M_2=I=\{0\leq \tau\leq1\},A_1=A,A_2=TI$. Then, an $
\mathcal{A}$-connection $\tilde{\nabla}$ on $\pi_1^{-1}(V)$ is called a
{\it link} between the $A$-connections $\nabla^0,\nabla^1$ on $V$.
Formula (\ref{eqfamilie}) shows that the local connection forms of
$\tilde{\nabla}$ are given by
\begin{equation}\label{eqlink}
\tilde{\omega}_u^v=\omega_{(\tau)u}^v+\lambda_u^v(x,\tau)d\tau,\end{equation}
where $\omega_{(\tau)}$ is the local connection matrix at the
fixed value $\tau$ and $\lambda_u^v\in C^\infty(M\times I)$. A
simple calculation gives the corresponding local curvature forms
\begin{equation}\label{curblink}
\tilde{\Omega}_u^v=\Omega_{(\tau)u}^v+\Lambda_u^v\wedge d\tau,
\end{equation} where $$
\Lambda_u^v=d_A\lambda_u^v+
\lambda_u^w\omega_{(\tau)w}^v-\lambda_w^v\omega_{(\tau)u}^w+
\frac{\partial\omega_{(\tau)u}^v}{\partial \tau}$$
(the partial derivative with respect to $\tau$ is applied to the
coefficients of the form).

Now, we present another ``selected topic". Let $V\rightarrow M$ be
a vector bundle of rank $r$ endowed with either a positive,
symmetric tensor $g_+\in\Gamma\odot^2V^*$ or a $2$-form
$g_-\in\Gamma\wedge^2V^*$. We shall say that $(V,g_{\pm})$ is a
{\it quasi-(skew)-metric vector bundle}. Notice that we do not ask
$rank\,g_\pm$ to be constant on $M$. An $A$-connection $\nabla$ on
$V$ such that $\nabla g_\pm=0$ will be called a {\it
quasi-(skew)-metric connection}. If $g_{\pm}$ is non degenerate,
the particle ``quasi" will not be used and the connection is
called {\it orthogonal} for $g_+$ and {\it symplectic} for $g_-$.
For a Lie algebroid $A$ over $M$ we shall denote by $L$ a generic,
integral leaf of the distribution $im\,\sharp_A$ and by $L_x$ the
leaf through the point $x\in M$. In what follows we establish
properties of a quasi-(skew)-metric connection that are relevant
to the construction of characteristic classes.
\begin{prop}\label{theorem1} Assume that there exists a
quasi-(skew)-metric connection $\nabla$ on $(V,g_\pm)$. Then, the
following properties hold. 1. If $x\in M$ and $k\in\Gamma V$ is
such that $k|_{L(x)}\in K|_{L(x)}$ $(K=ann\,g_\pm)$, then
$\nabla_ak(x)\in K_x$, $\forall a\in\Gamma A$. 2.
$q=rank\,g_\pm|_L$ is constant along each leaf $L$ and $\forall
x\in M$ there exists an open neighborhood $U_x$ where $V$ has a
local basis of cross sections of the form $(s_h,t_l)$
$(h=1,...,q,\,l=1,...,r-q)$ such that $t_l|_{U_x\cap L_x}\in
K|_{U_x\cap L_x}$ and the projections $[s_h]=s_h\,({\rm mod.}\,K)$
define a canonical basis of the (skew)-metric vector bundle
$((V/K)|_{U_x\cap L_x},g'_\pm)$, where $g'_\pm$ is non-degenerate
and induced by $g_\pm$. 3. With respect to this basis, the
$A$-connection $\nabla$ has local equations
\begin{equation}\label{eclocaleconex} \nabla s_h=\varpi_{(1)h}^ks_k
+\varpi_{(2)h}^pt_p,\; \nabla
t_l=\varpi_{(3)l}^ks_k+\varpi_{(4)l}^pt_p,\end{equation} where the
coefficients are local $1$-$A$-forms, $\varpi_{(3)l}^k(x)=0$ and
$(\varpi_{(1)h}^k(x))\in o(q)$, the orthogonal Lie algebra, in the
$g_+$-case, $(\varpi_{(1)h}^k(x))\in sp(q,\mathds{R})$, the
symplectic Lie algebra, in the $g_-$-case. 4. The curvature of
$\nabla$ has the local expression
\begin{equation}\label{eclocalecurb} R_\nabla s_h=\Phi_{(1)h}^ks_k
+\Phi_{(2)h}^pt_p,\; R_\nabla
t_l=\Phi_{(3)l}^ks_k+\Phi_{(4)l}^pt_p,\end{equation} where the
coefficients are local $2$-$A$-forms and $\Phi_{(3)l}^k(x)=0$,
$(\Phi_{(1)h}^k(x))\in o(q)$ in the $g_+$-case,
$(\Phi_{(1)h}^k(x))\in sp(q,\mathds{R})$ in the
$g_-$-case.\end{prop}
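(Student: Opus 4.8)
The plan is to extract all four statements from the single defining identity of a quasi-(skew)-metric connection,
$$\sharp_A a\big(g_\pm(u,v)\big)=g_\pm(\nabla_a u,v)+g_\pm(u,\nabla_a v)\qquad(a\in\Gamma A,\ u,v\in\Gamma V),$$
combined with the fact that $\sharp_A a(x)\in T_xL_x$ always lies tangent to the leaf. For part 1, fix $k$ with $k|_{L(x)}\in K|_{L(x)}$ and an arbitrary $w\in\Gamma V$. Then $g_\pm(k,w)$ vanishes identically on $L(x)$, so its derivative in the leaf-tangent direction $\sharp_A a(x)$ is $0$; moreover $g_\pm(k,\nabla_a w)(x)=0$ because $k(x)\in K_x$. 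The identity then forces $g_\pm(\nabla_a k,w)(x)=0$ for every $w$, i.e.\ $\nabla_a k(x)\in K_x$.

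For part 2 I would first upgrade part 1 to a transport statement: applying it at each point of a leaf shows that $k|_L\in K|_L$ implies $\nabla_a k|_L\in K|_L$, equivalently that parallel transport along leaf-paths carries $K$ into $K$ (if $k$ and $w$ are both parallel along such a path then $g_\pm(k,w)$ is constant, hence stays $0$ once it starts in the radical). Since parallel transport is a linear isomorphism, $\dim K_x$, and therefore $q=\mathrm{rank}\,g_\pm|_L=r-\dim K|_L$, is constant along $L$. Constancy of the rank makes $K|_{L_x}$ a smooth subbundle near $x$, so I may choose local sections $t_1,\dots,t_{r-q}$ spanning it along $U_x\cap L_x$; the induced form $g'_\pm$ on $(V/K)|_{U_x\cap L_x}$ being non-degenerate, a fibrewise Gram--Schmidt (for $g_+$) or symplectic Gram--Schmidt (for $g_-$) yields a smooth canonical basis $[s_h]$, and lifting the $s_h$ to representatives and completing produces the required basis $(s_h,t_l)$ of $V$ on a neighbourhood $U_x$.

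The two vanishing assertions $\varpi_{(3)l}^k(x)=0$ and $\Phi_{(3)l}^k(x)=0$ both say that $\nabla_a t_l$, respectively $R_\nabla(a_1,a_2)t_l$, has no $s$-component at $x$, i.e.\ lies in $K_x$; for $\nabla$ this is immediate from part 1, while for the curvature I iterate it: from $t_l|_{L_x}\in K|_{L_x}$ part 1 gives $\nabla_{a_2}t_l|_{L_x}\in K|_{L_x}$, so a second application yields $\nabla_{a_1}\nabla_{a_2}t_l(x)\in K_x$, and likewise for the remaining two terms of $R_\nabla$, whence $R_\nabla(a_1,a_2)t_l(x)\in K_x$. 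The algebra-membership statements follow by evaluating, at $x$, the parallel identity (for $\varpi_{(1)}$) and its curvature counterpart $g_\pm(R_\nabla\,\cdot\,,\cdot)+g_\pm(\cdot,R_\nabla\,\cdot\,)=0$, itself a consequence of $\nabla g_\pm=0$ (for $\Phi_{(1)}$), on pairs $(s_h,s_k)$: along $L_x$ the matrix $E=(g_\pm(s_h,s_k))=(g'_\pm([s_h],[s_k]))$ is the constant canonical matrix (the identity for $g_+$, the standard symplectic matrix for $g_-$), the $t$-terms drop out since $t_p(x)\in K_x$, and what remains is precisely the assertion that $(\varpi_{(1)h}^k(x))$, resp.\ $(\Phi_{(1)h}^k(x))$, is skew-adjoint for $E$---that is, it lies in $o(q)$ in the $g_+$-case and in $sp(q,\mathds{R})$ in the $g_-$-case.

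The genuinely substantive step is the constant-rank claim of part 2 together with the smooth construction of the canonical basis: here one must really use that parallel transport along a leaf preserves the radical $K$, a phenomenon special to leaf directions (off the leaf $\sharp_A a$ need not be tangent to anything and $\mathrm{rank}\,g_\pm$ is allowed to jump). Once the defining identity and this constant-rank basis are in place, the surviving computations in parts 3 and 4 are purely formal.
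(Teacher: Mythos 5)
Your proof is correct and follows essentially the same route as the paper: part 1 from the compatibility identity plus tangency of the anchor to the leaf, part 2 from preservation of $g_\pm$ (equivalently of $K$) under parallel transport along leaf paths followed by a local choice of canonical basis, and parts 3--4 by evaluating that identity and its curvature analogue on the pairs $(s_h,s_k)$ and $(s_h,t_l)$. The only minor deviation is that you obtain $\Phi_{(3)l}^k(x)=0$ by iterating part 1 twice, whereas the paper applies the skew-adjointness identity for $R_\nabla$ to $(s_h,t_l)$; both arguments are valid.
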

\begin{proof} 1. For any $a\in\Gamma A,v\in\Gamma V$ one has
\begin{equation}\label{conexsympl}
(\nabla_ag_\pm)_x(v(x),k(x))=(\sharp_Aa)_x(g_\pm(v,k))-
g_{\pm,x}(\nabla_av(x),k(x))\end{equation}
$$-g_{\pm,x}(v(x),\nabla_ak(x))=0.$$ Since
$(\sharp_Aa)_x(g_\pm(v,k))$ depends only on $k|_{L(x)}\in
ann\,g_\pm$, it vanishes, and we get the required result.

2. $\nabla g_\pm=0$ is equivalent with the fact that $g_\pm$ is
preserved by parallel translations along paths in a leaf $L$
\cite{F}, therefore, $g_{\pm}$ has a constant rank $q$ along
$L_x$. This implies the existence of bases with the required
properties on a neighborhood $U_x\cap L$ of $x$. (In the metric
case canonical means orthonormal and in the skew-metric case
canonical means symplectic.) Then, take any extension of such a
basis to $U_x$ and shrink the neighborhood $U_x$ as needed to
ensure the linear independence of the extended cross sections.

3. The equality $\varpi_{(3)l}^k(x)=0$ is an immediate consequence
of part 1. Then, in (\ref{conexsympl}), replace $v,k$ by
$s_h,s_k$. Since the canonical character of the basis $(s_{h}|_L)$
implies $g_{\pm}(s_{h}|_L,s_{k}|_L)= const.$, we get
$$g_{\pm,x}(\nabla_as_h(x),s_k(x))
+g_{\pm,x}(s_h(x),\nabla_as_k(x))=0,$$ whence,
$(\varpi_{(1)i}^j(x))\in o(q),\, sp(q,\mathds{R})$, respectively.

4. The (skew)-metric condition (\ref{conexsympl}) also implies
$$\sharp_A
[a_1,a_2]_A(g_\pm(v_1,v_2))=g_\pm(\nabla_{[a_1,a_2]_A}v_1,v_2)+
g_\pm(v_1,\nabla_{[a_1,a_2]_A}v_2),$$ where $a_1,a_2\in\Gamma A,
v_1,v_2\in\Gamma V$, whence, after some obvious cancellations we
get
\begin{equation}\label{auxcurv}\sharp_A[a_1,a_2]_A(\omega(v_1,v_2)) =
-\omega(R_\nabla(a_1,a_2)v_1,v_2)-\omega(v_1,R_\nabla(a_1,a_2)v_2).\end{equation}
Like in the proof of 3, (\ref{auxcurv}) for $s_h,s_k$ implies
$(\Phi_{(1)i}^j(x))\in o(q),\, sp(q,\mathds{R})$, respectively.
Then, (\ref{auxcurv}) for $s_h,t_l$ together with part 1 of the
proposition implies $\Phi_{(3)l}^k(x)=0$.
\end{proof}

In the theory of characteristic classes we need the Weil algebra
$I(Gl(r,\mathds{R}))$ $=\oplus_{k\geq0}I^k(Gl(r,\mathds{R}))$,
where $I^k(Gl(r,\mathds{R}))$ is the space of real, ad-invariant,
symmetric, $k$-multilinear functions (equivalently, invariant,
homogeneous polynomials of degree $k$) on the Lie algebra of the
general, linear group $(r=rank\,V)$. Using the exterior product
$\wedge$, such functions may be evaluated on arguments that are
local matrices of $\wedge$-commuting $A$-forms on $M$ with
transition functions of the adjoint type and the result is a
global $A$-form on $M$ (e.g.,
\cite{V}). Secondary characteristic classes appear as a
consequence of vanishing phenomena encountered in the evaluation
process described above. We shall need the following vanishing
phenomenon (see \cite{F}):
\begin{prop}\label{theorem2} If the bundle $V$ endowed with the
form $g_\pm$ has a connection $\nabla$ such that $\nabla g_\pm=0$
and if $R_\nabla(a_1,a_2)k_x=0$ for $x\in M$, $a_1,a_2\in\Gamma
A$, $k\in ker\,g_{\pm,x}$, then, $\forall\phi\in
I^{2k-1}(Gl(r,\mathds{R}))$, one has $\phi(\Phi)=0$, where $\Phi$
is the local curvature matrix of the connection $\nabla$.
\end{prop}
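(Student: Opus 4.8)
The plan is to reduce the assertion to a pointwise, purely algebraic computation with the curvature matrix, exploiting the block structure forced by Proposition \ref{theorem1} together with the extra curvature hypothesis. Since $\phi(\Phi)$ is a globally defined $A$-form obtained fiberwise, by multilinear algebra, from the matrix $\Phi$, it is enough to prove that $\phi(\Phi)(x)=0$ for each fixed $x\in M$. Moreover, the entries of $\Phi$ are $A$-$2$-forms, which $\wedge$-commute, so at a fixed $x$ one may treat $\Phi(x)$ as an ordinary matrix whose entries lie in the commutative ring of even-degree elements of $\wedge A_x^*$, and evaluate $\phi$ through the usual trace formulas.

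First I would fix $x$ and invoke Proposition \ref{theorem1} to choose the adapted basis $(s_h,t_l)$ in which the curvature has the form (\ref{eclocalecurb}). Part~4 of that proposition already gives $\Phi_{(3)}(x)=0$ and $\Phi_{(1)}(x)\in o(q)$ (resp. $sp(q,\mathds{R})$). The role of the additional hypothesis $R_\nabla(a_1,a_2)k_x=0$ for $k\in\ker g_{\pm,x}=K_x$ is precisely to annihilate the remaining lower block: since $t_l(x)\in K_x$, evaluating $R_\nabla t_l=\Phi_{(3)l}^{k}s_k+\Phi_{(4)l}^{p}t_p$ at $x$ and using that $(s_h(x),t_l(x))$ is a basis of $V_x$ forces $\Phi_{(4)}(x)=0$ as well. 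Thus in this basis
\begin{equation*}
\Phi(x)=\begin{pmatrix}\Phi_{(1)}(x)&\Phi_{(2)}(x)\\ 0&0\end{pmatrix},\qquad
\Phi_{(1)}(x)\in o(q)\ \text{(resp. }sp(q,\mathds{R})\text{)}.
\end{equation*}

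Next I would carry out the trace computation. For a matrix of this block-triangular shape with vanishing bottom row one has $\Phi(x)^j=\bigl(\begin{smallmatrix}\Phi_{(1)}^j & \Phi_{(1)}^{j-1}\Phi_{(2)}\\ 0 & 0\end{smallmatrix}\bigr)$, hence $\mathrm{tr}\,\Phi(x)^j=\mathrm{tr}\,\Phi_{(1)}(x)^j$; the undetermined off-diagonal block $\Phi_{(2)}$ contributes to no trace. Since $\Phi_{(1)}(x)$ lies in $o(q)$ or $sp(q,\mathds{R})$, it is conjugate to $-\Phi_{(1)}(x)$, so $\mathrm{tr}\,\Phi_{(1)}(x)^j=(-1)^j\,\mathrm{tr}\,\Phi_{(1)}(x)^j$ and therefore $\mathrm{tr}\,\Phi(x)^j=0$ for every odd $j$.

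To conclude, recall that the ad-invariant polynomials on $gl(r,\mathds{R})$ form the polynomial algebra generated by the power traces $p_j(X)=\mathrm{tr}(X^j)$, with $p_j$ of degree $j$. A homogeneous element of odd degree $2k-1$ is a linear combination of monomials $\prod_j p_j^{a_j}$ with $\sum_j ja_j=2k-1$; as this sum is odd, each monomial must contain at least one factor $p_j$ with $j$ odd. Evaluated on $\Phi(x)$, every such factor vanishes by the previous paragraph, so each monomial, and hence $\phi(\Phi)(x)$, is zero; since $x$ was arbitrary, $\phi(\Phi)=0$. I expect the only real point to watch is the opening reduction step: one must check that the hypothesis $R_\nabla k=0$ genuinely upgrades Proposition \ref{theorem1}(4) to kill the \emph{whole} bottom row $(\Phi_{(3)},\Phi_{(4)})$, which is what pushes the uncontrolled block $\Phi_{(2)}$ strictly above the diagonal and makes it invisible to traces; the two remaining ingredients (vanishing of odd power traces on $o(q)$ and $sp(q,\mathds{R})$, and the presence of an odd-weight factor in every odd-degree invariant) are standard.
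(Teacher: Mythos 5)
Your proof is correct and follows essentially the same route as the paper's: reduce to the pointwise block matrix $\bigl(\begin{smallmatrix}\Phi_{(1)}&*\\0&0\end{smallmatrix}\bigr)$ (the paper writes the transposed convention $\bigl(\begin{smallmatrix}\Phi_{(1)}&0\\\Phi_{(2)}&0\end{smallmatrix}\bigr)$), observe that invariants only see $\Phi_{(1)}\in o(q)$ or $sp(q,\mathds{R})$, and use the vanishing of odd-degree generators there. The only (cosmetic) difference is that you generate $I(Gl(r,\mathds{R}))$ by power traces $\mathrm{tr}(X^j)$ and prove their odd vanishing via conjugacy to $-\Phi_{(1)}$, whereas the paper uses the Chern polynomials $c_h$ and cites their odd vanishing; you also make explicit two steps the paper leaves implicit, namely that the hypothesis kills $\Phi_{(4)}(x)$ as well as $\Phi_{(3)}(x)$, and that every odd-degree invariant monomial contains an odd-weight factor.
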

\begin{proof} By $\phi(\Phi)$ we understand the
evaluation of $\phi$ where all the arguments are equal to $\Phi$.
It is known that (with a harmless abuse of terminology and
notation) the required functions $\phi$ are spanned by the Chern
polynomials
\begin{equation}\label{Chern}
c_h(F)=\frac{1}{h!}\delta^{v_1...v_h}_{u_1...u_h}f_{v_1}^{u_1}
...f_{v_h}^{u_h}\end{equation} ($\delta^{...}_{...}$ is the
multi-Kronecker index), which are the sums of the principal minors
of order $h$ in $det(F-\lambda Id)$ $(F\in gl(r,\mathds{R}))$.
With the notation of Proposition \ref{theorem1} and since
$R_\nabla(a_1,a_2)k_x=0$, we have to take
$$F=\left(\begin{array}{cc}
\Phi_{(1)}&0\vspace{2mm}\\\Phi_{(2)}&0\end{array}\right).$$ Therefore,
$\forall x\in M$, we have $c_h(F)=c_h(\Phi_{(1)x})$. It is known
that the polynomials $c_{2l-1}$ vanish on $o(q)$ and on
$sp(q,\mathds{R})$ (in the first case $\Phi_{(1)}$ is
skew-symmetric; for the second case see Remark 2.1.10 in \cite{V},
for instance).\end{proof}
\section{Secondary characteristic classes}
A brief exposition of the classical theory of real characteristic
classes may be found in \cite{V}. In this section, we present a
Lie algebroid version of the basic facts of the theory.

Consider the direct product Lie algebroid $\mathcal{A}=A\times
T\Delta^k\rightarrow M\times\Delta^k$, where
$$\Delta^k=\{(t_0,t_1,...,t_k)\in\mathds{R}^{k+1}\,/\,t_h\geq0,\,
\sum_{h=0}^k t_h=1\}$$ is the standard $k$-simplex, $A$ is a Lie
algebroid over $M$ and $T\Delta^k$ is the tangent bundle of
$\Delta^k$ endowed with the standard orientation
$\kappa=dt^1\wedge...\wedge dt^k$. Then,
$\forall\Phi\in\Omega^*(\mathcal{A})$, the fiber-integral
$\int_{\Delta^k}\Phi$ is defined as zero except for the case
$$\Phi=\alpha\wedge\kappa,\hspace{3mm}\alpha=
\frac{1}{p!}\alpha_{i_1...i_p}(x,t)b^{*i_1}\wedge...\wedge b^{*i_p}
\;\;(x\in M,t\in\Delta^k)$$ when
$$\int_{\Delta^k}\Phi=\frac{1}{p!}
\left(\int_{\Delta^k}\alpha_{i_1...i_p}(x,t)\kappa\right)
b^{*i_1}\wedge...\wedge b^{*i_p}\in\Omega^p(A)$$ ($b_i$ is a local
basis of cross sections of $A$). The same proof as in the
classical case (e.g., \cite{V}, Theorem 4.1.6) yields the Stokes
formula:
\begin{equation}\label{Stokes}\int_{\Delta^k}d_\mathcal{A}
\Phi-d_{A}\int_{\Delta^k}\Phi=
(-1)^{deg\,\Phi-k}\int_{\partial{\Delta^k}}\iota^*\Phi,\hspace{5mm}\iota:
\partial{\Delta^k}\subseteq{\Delta^k}.\end{equation}

Assume that we have $k+1$ $A$-connections $\nabla^{(s)}$ on the
vector bundle $V\rightarrow M$ that have the local connection
matrices $\omega_{(\alpha)}$ $(\alpha=0,...,k)$ with respect to
the local basis $(w_u)$ of $V$. Then, the convex combination
\begin{equation}\label{averagecon}
\nabla^{(t)}=\sum_{\alpha=0}^kt^\alpha\nabla^{\alpha},\hspace{3mm}
t=(t^0,...,t^k)\in\Delta^k,\end{equation} defines a family of
$A$-connections parameterized by $\Delta^k$ with the corresponding
$\mathcal{A}$-connection $\tilde{\nabla}$ on
$\pi_1^{-1}(V)\rightarrow M\times\Delta^k$
$(\pi_1:M\times\Delta^k\rightarrow M)$. The connection and
curvature matrices of $\tilde{\nabla}$ will be denoted by
$\tilde{\omega},\tilde{\Omega}$; generally, the curvature matrix
of a connection will be denoted by the upper case of the letter
that denotes the connection matrix. There exists a homomorphism
$$\Delta(\nabla^0,...,\nabla^k):I^h(Gl(r,\mathds{R}))
\rightarrow\Omega^{2h-k}(A),$$ defined by R. Bott in the classical
case, given by
\begin{equation}\label{Bottmare}
\Delta(\nabla^0,...,\nabla^k)\phi=(-1)^{\left[\frac{k+1}{2}\right]}
\int_{\Delta^k}\phi(\tilde{\Omega}),\hspace{2mm}\phi\in
I^h(Gl(r,\mathds{R})).\end{equation} Moreover, Bott's proof in the
classical case (\cite{V}, Proposition 4.2.3) also holds in the Lie
algebroid version and yields the following formula
\begin{equation}\label{diffBott}
d_A(\Delta(\nabla^0,...,\nabla^k)\phi) =
\sum_{\alpha=0}^k(-1)^\alpha\Delta(\nabla^0,...,\nabla^{\alpha-1},
\nabla^{\alpha+1},...,\nabla^k)\phi.\end{equation}

Let $\nabla$ be an $A$-connection on the  vector bundle
$V\rightarrow M$. As a consequence of the Bianchi identity,
$\forall\phi\in I^h(Gl(r,\mathds{R}))$,
$\Delta(\nabla)\phi\in\Omega^{2h}(A)$ is a $d_A$-closed $A$-form
and the $A$-cohomology classes defined by the $A$-forms
$\Delta(\nabla)\phi$ are called the $A$-{\it principal
characteristic classes of $V$}
\cite{F}. If $\nabla^0,\nabla^1$ are two $A$-connections, formula
(\ref{diffBott}) yields
\begin{equation}\label{Bott2conex}\Delta(\nabla^1)\phi-\Delta(\nabla^0)\phi=
d_A\Delta(\nabla^0,\nabla^1)\phi.\end{equation} Therefore, the
principal characteristic classes do not depend on the choice of
the connection.

The $\mathcal{A}$-connection $\tilde{\nabla}$ to be used in
definition (\ref{Bottmare}) of $\Delta(\nabla^0,\nabla^1)\phi$ is
the link between $\nabla^0,\nabla^1$ given by the family of
$A$-connections
$$\nabla^{(\tau)}=(1-\tau)\nabla^0+\tau\nabla^1=\nabla^0+\tau D,\hspace{3mm}
D=\nabla^1-\nabla^0,\;\;(\tau\in I).$$ For this link, we have
(\ref{eqlink}) and (\ref{curblink}) where
$\lambda_u^v=0,\partial\omega_{(\tau)u}^v/\partial\tau=\alpha$,
the local matrix of the connection difference $D$, and formula
(\ref{Bottmare}) yields
\begin{equation}\label{defDelta} \Delta(\nabla^0,\nabla^1)\phi
=h\int_0^1\phi(\alpha,\underbrace{\Omega_{(\tau)},...,\Omega_{(\tau)}}_{(h-1)-{\rm
times}})d\tau,\end{equation} where
\begin{equation}\label{Omegat}
\Omega_{(\tau)}=(1-\tau)\Omega_{(0)}+\tau\Omega_{(1)}
+\tau(1-\tau)\alpha\wedge\alpha\end{equation} is the local
curvature matrix of the connection $\nabla^{(\tau)}$.

We shall use the Lehmann version of the theory of secondary
characteristic classes \cite{{L},{V}}. Let $(J_0,J_1)$ be two
(proper) homogeneous ideals of $I=I(Gl(r,\mathds{R}))$. Define the
algebra
\begin{equation}\label{Weil}W(J_0,J_1)=(I/J_0)\otimes(I/J_1)\otimes(\wedge(I^+))
\hspace{3mm}(I^+=\oplus_{k>0}I^k),\end{equation} with the graduation
$$deg\,[\phi]_{J_0}
=deg\,[\phi]_{J_1}=2h,\,deg\,\hat\phi=2h-1,$$ and the differential
$$d[\phi]_{J_0}=d[\phi]_{J_1}=0,\,d\hat\phi=[\phi]_{J_1}-[\phi]_{J_0},$$
where we refer to the three elements defined by $\phi\in I^h$ in
the factors of $W$.

Now, take a vector bundle $V\rightarrow M$ and two $A$-connections
$\nabla^0,\nabla^1$ on $V$ such that $J_c\subseteq
ker\,\Delta(\nabla^c)$, $c=0,1$. By putting
\begin{equation}\label{defrho}\rho[\phi]_{J_0}=\Delta(\nabla^0)\phi,\,
\rho[\phi]_{J_1}=\Delta(\nabla^1)\phi,\,
\rho\hat\phi=\Delta(\nabla^0,\nabla^1)\phi,\end{equation} we get a
homomorphism of differential graded algebras
$$\rho(\nabla_0,\nabla_1):W(J_0,J_1)\rightarrow\Omega(A)$$
with an induced cohomology homomorphism
$$\rho^{*}(\nabla^0,\nabla^1):H^*(W(J_0,J_1))\rightarrow
H^*(A).$$ The cohomology classes in $im\,\rho^{*}$ that are not
principal characteristic classes are called $A$-{\it secondary
characteristic classes}.

If $J$ is a homogeneous ideal of $I(Gl(r,\mathds{R}))$, two
$A$-connections $\nabla,\nabla'$ on $V$ are called $J$-{\it
homotopic connections} if there exists a finite chain of links
$\tilde{\nabla}^0,...,\tilde{\nabla}^n$ that starts with $\nabla$,
ends with $\nabla'$ and is such that $J\subseteq\cap_{l=0}^n
ker\,\Delta(\tilde{\nabla}^l)$. By replacing the usual Stokes'
formula by formula (\ref{Stokes}) in the proof of Theorem 4.2.28
of \cite{V}, one gets
\begin{prop}\label{invhomot} {\rm \cite{L}} The cohomology homomorphism
$\rho^{*}(\nabla^0,\nabla^1)$ remains unchanged if
$\nabla^0,\nabla^1$ are replaced by $J_0,J_1$-homotopic
connections $\nabla^{'0},\nabla^{'1}$, respectively ($J_c\subseteq
ker\,\Delta(\nabla^c)$, $J_c\subseteq ker\,\Delta(\nabla^{'c})$,
$c=0,1$).\end{prop}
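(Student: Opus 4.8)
The plan is to reproduce, in the Lie algebroid setting, the classical chain-homotopy argument for the homotopy invariance of the Lehmann map (Theorem 4.2.28 of \cite{V}), the only modification being that every integration over a simplex is now governed by the Lie algebroid Stokes formula (\ref{Stokes}); it is exactly this formula that yields the difference identity (\ref{diffBott}), which is the sole place where the analysis enters. Everything else is formal.

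First I would reduce to the case of a \emph{single} link. A $J_0$-homotopy from $\nabla^0$ to $\nabla^{'0}$ is a chain of $A$-connections $\nabla^0=\mu_0,\mu_1,\dots,\mu_N=\nabla^{'0}$ whose consecutive links $\tilde\nabla^l$ satisfy $J_0\subseteq ker\,\Delta(\tilde\nabla^l)$, i.e. $\Delta(\mu_l,\mu_{l+1})\phi=0$ for all $\phi\in J_0$. Formula (\ref{Bott2conex}) gives $\Delta(\mu_{l+1})\phi-\Delta(\mu_l)\phi=d_A\Delta(\mu_l,\mu_{l+1})\phi$, so for $\phi\in J_0$ one has $\Delta(\mu_{l+1})\phi=\Delta(\mu_l)\phi$; since $J_0\subseteq ker\,\Delta(\nabla^0)$, induction shows $J_0\subseteq ker\,\Delta(\mu_l)$ for every $l$. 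Hence each pair $(\mu_l,\nabla^1)$ defines, through (\ref{defrho}), a homomorphism $\rho(\mu_l,\nabla^1)$, and—because equality of the induced cohomology maps is transitive—it suffices to prove that $\rho^*(\mu_l,\nabla^1)=\rho^*(\mu_{l+1},\nabla^1)$ for a single link. The analogous argument applied to the second slot (keeping $\nabla^{'0}$ fixed) then passes from $(\nabla^{'0},\nabla^1)$ to $(\nabla^{'0},\nabla^{'1})$, and composing the two steps yields the proposition.

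For a single link, write $\nabla:=\mu_l$, $\nabla':=\mu_{l+1}$, with $J_0\subseteq ker\,\Delta(\nabla,\nabla')$, and keep $\nabla^1$ fixed. I would build a homotopy operator $\mathcal{H}\colon W(J_0,J_1)\to\Omega(A)$ of degree $-1$, defined on generators by $\mathcal{H}[\phi]_{J_0}=\Delta(\nabla,\nabla')\phi$, $\mathcal{H}[\phi]_{J_1}=0$, and $\mathcal{H}\hat\phi=-\Delta(\nabla,\nabla^1,\nabla')\phi$, extended to products as a derivation over the pair $(\rho(\nabla',\nabla^1),\rho(\nabla,\nabla^1))$. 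Well-definedness on the factor $I/J_0$ is precisely the link condition $\Delta(\nabla,\nabla')\phi=0$ for $\phi\in J_0$; on $I/J_1$ it is trivial; and the $\hat\phi$-component is unconstrained because $\wedge(I^+)$ is free. One then checks $\rho(\nabla',\nabla^1)-\rho(\nabla,\nabla^1)=d_A\circ\mathcal{H}+\mathcal{H}\circ d$ on the three types of generators. On $[\phi]_{J_0}$ it is (\ref{Bott2conex}) for $\nabla,\nabla'$; on $[\phi]_{J_1}$ both sides vanish; and on $\hat\phi$, after using $d\hat\phi=[\phi]_{J_1}-[\phi]_{J_0}$, it reduces to the $k=2$ instance of (\ref{diffBott}),
$$d_A\Delta(\nabla,\nabla^1,\nabla')\phi=\Delta(\nabla^1,\nabla')\phi-\Delta(\nabla,\nabla')\phi+\Delta(\nabla,\nabla^1)\phi,$$
together with the antisymmetry $\Delta(\nabla^1,\nabla')\phi=-\Delta(\nabla',\nabla^1)\phi$ of the one-simplex form. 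Propagating the identity from generators to arbitrary products is the standard fact that a derivation-type homotopy satisfying the relation on algebra generators satisfies it everywhere.

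The construction contains no new analytic difficulty: once (\ref{Stokes}) is in force on the product algebroid $A\times T\Delta^k$, the identities (\ref{Bott2conex}) and (\ref{diffBott}) hold and the argument is purely algebraic. The only points that genuinely demand care—and where I expect the real work to lie—are the two bookkeeping steps involving the ideals: verifying that \emph{every} intermediate connection $\mu_l$ inherits the kernel condition, so that $\rho$ is defined at each vertex of the chain, and checking that $\mathcal{H}$ descends to the quotient algebra $W(J_0,J_1)$. It is exactly here that the hypothesis of $J_c$-homotopy, namely the vanishing of the link difference forms on $J_c$, is indispensable.
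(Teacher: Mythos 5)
Your architecture is the right one, and it is exactly what the paper's one-sentence proof points to: the paper simply cites the chain-homotopy argument of Theorem 4.2.28 in \cite{V}, with the fibre-integration Stokes formula (\ref{Stokes}) supplying (\ref{Bott2conex}) and (\ref{diffBott}). The reduction to a single link, the degree $-1$ operator $\mathcal{H}$, and the verification of $\rho(\nabla',\nabla^1)-\rho(\nabla,\nabla^1)=d_A\circ\mathcal{H}+\mathcal{H}\circ d$ on the three types of generators (using the $k=2$ case of (\ref{diffBott}) and the antisymmetry of the one-simplex form) are all correct as algebra.

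There is, however, one genuine slip, and it sits precisely at the spot you yourself flag as the place ``where the real work lies''. You translate the hypothesis $J_0\subseteq ker\,\Delta(\tilde\nabla^l)$ as ``i.e.\ $\Delta(\mu_l,\mu_{l+1})\phi=0$ for all $\phi\in J_0$''. These are not the same condition. In this paper $\Delta(\mu_l,\mu_{l+1})$ is, by definition, the transgression computed from the \emph{affine} link $(1-\tau)\mu_l+\tau\mu_{l+1}$ (see (\ref{defDelta})), whereas the links $\tilde\nabla^l$ in the definition of $J$-homotopy are \emph{arbitrary} $\mathcal{A}$-connections joining $\mu_l$ to $\mu_{l+1}$; the hypothesis $\phi(\tilde\Omega^l)=0$ gives the vanishing of the transgression of \emph{that} link, and transgressions of two different links between the same endpoints agree only up to a $d_A$-exact term, not on the nose. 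The distinction is not hypothetical: the paper's own application uses a non-affine link (the orthogonal link between orthogonal connections for two different metrics $g,g'$ is a metric connection for $(1-\tau)g+\tau g'$, not the affine combination of the endpoint connections). As written, this breaks your induction giving $J_0\subseteq ker\,\Delta(\mu_l)$ (easily repaired: restricting $\tilde\Omega^l$ to $\tau=0,1$ gives the vertex conditions directly), and, more seriously, it leaves the well-definedness of $\mathcal{H}[\phi]_{J_0}=\Delta(\nabla,\nabla')\phi$ on $I/J_0$ unjustified. The repair is to build $\mathcal{H}$ from the \emph{given} link: set $\mathcal{H}[\phi]_{J_0}=\pm\int_{\Delta^1}\phi(\tilde\Omega)$, for which vanishing on $J_0$ is the hypothesis verbatim and for which (\ref{Stokes}) still yields $d_A\mathcal{H}[\phi]_{J_0}=\Delta(\nabla')\phi-\Delta(\nabla)\phi$; the identity on $\hat\phi$ then requires a two-parameter interpolating connection whose boundary edges are $\tilde\nabla$ and the two affine links to $\nabla^1$, again controlled by (\ref{Stokes}). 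With that substitution your proof closes.
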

\begin{corol}\label{corolhomotopie} The secondary characteristic
classes are invariant by any $J_0J_1$-homotopy of the
connections.\end{corol}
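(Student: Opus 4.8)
The plan is to derive this directly from Proposition \ref{invhomot}, since a $J_0J_1$-homotopy of the pair $(\nabla^0,\nabla^1)$ is by definition nothing more than replacing $\nabla^0$ by a $J_0$-homotopic connection $\nabla^{'0}$ and $\nabla^1$ by a $J_1$-homotopic connection $\nabla^{'1}$, which is exactly the hypothesis of that proposition. First I would recall that, by construction, the secondary characteristic classes associated with $(\nabla^0,\nabla^1)$ are precisely those cohomology classes lying in $im\,\rho^{*}(\nabla^0,\nabla^1)$ that are not principal characteristic classes. Thus the entire content to be verified is that this distinguished subset of $H^{*}(A)$ does not change when the connections are deformed through a $J_0J_1$-homotopy.

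Next I would invoke Proposition \ref{invhomot} to conclude that $\rho^{*}(\nabla^0,\nabla^1)=\rho^{*}(\nabla^{'0},\nabla^{'1})$ as homomorphisms $H^{*}(W(J_0,J_1))\rightarrow H^{*}(A)$. An equality of maps forces an equality of images, so $im\,\rho^{*}(\nabla^0,\nabla^1)=im\,\rho^{*}(\nabla^{'0},\nabla^{'1})$; in particular the total collection of classes produced by the Lehmann homomorphism is unaffected by the homotopy.

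It then remains only to separate the principal part from the secondary part inside this common image. Here I would use formula (\ref{Bott2conex}), which shows that the principal characteristic classes $\Delta(\nabla)\phi$ are independent of the chosen connection and hence are intrinsic invariants of the bundle $V$ alone. Consequently the principal classes occurring in $im\,\rho^{*}(\nabla^0,\nabla^1)$ coincide with those occurring in $im\,\rho^{*}(\nabla^{'0},\nabla^{'1})$, and since the two images agree, their complementary subsets---the secondary characteristic classes---agree as well. I expect no genuine obstacle to arise: the only point deserving care is the bookkeeping observation that the property ``not principal'' is a condition intrinsic to $V$, so that deleting the principal classes from two equal images leaves equal sets of secondary classes.
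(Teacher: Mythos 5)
Your proposal is correct and follows essentially the same route as the paper, which states this corollary as an immediate consequence of Proposition \ref{invhomot} without further argument: since the full cohomology homomorphism $\rho^{*}(\nabla^0,\nabla^1)$ is unchanged under the homotopy, its image is unchanged, and the principal classes being connection-independent by (\ref{Bott2conex}), the secondary classes are unchanged as well. Your extra bookkeeping about separating the principal part is exactly the implicit content the paper leaves to the reader.
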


Denote by $J_{\rm odd}\subseteq I(Gl(r,\mathds{R}))$ the ideal
spanned by $\{\phi\in I^{2h-1}(Gl(r,\mathds{R})),\\h=1,2,...\}$.
As explained in Proposition \ref{theorem2}, if $\nabla$ is an
orthogonal connection for some metric $g$ on the vector bundle
$V$, then $J_{\rm odd}\subseteq ker\,\Delta(\nabla)$. Notice that
there always exist positive definite metrics $g$ on $V$ and
corresponding metric $A$-connections $\nabla$, $\nabla g=0$ (e.g.,
take $\nabla_a=\nabla'_{\sharp_Aa}$, where $\nabla'$ is a usual
orthogonal connection on $(V,g)$). Furthermore, any two orthogonal
$A$-connections on $V$ are $J_{\rm odd}$-homotopic. Indeed, if
$\nabla,\nabla'$ are orthogonal for the same metric $g$, then
$(1-\tau)\nabla+\tau\nabla'$ $(0\leq\tau\leq1)$ defines an
orthogonal link. If orthogonality is with respect to different
metrics $g,g'$, then $(1-\tau)g+\tau g'$ is a metric on the
pullback of $V$ to $M\times[0,1]$ and a corresponding metric
connection provides an orthogonal link between two orthogonal
connections $\bar{\nabla},\bar{\nabla}'$ with the metrics $g,g'$,
respectively. Thus, there exists a chain of three orthogonal links
leading from $\nabla$ to $\bar{\nabla}$, from $\bar{\nabla}$ to
$\bar{\nabla}'$ and from $\bar{\nabla}'$ to $\nabla'$, which
proves the $J_{\rm odd}$-homotopy of $\nabla,\nabla'$.

Now, let $(V,g_\pm)$ be a quasi-(skew)-metric vector bundle that
has a $K$-flat quasi-(skew)-metric connection $\nabla^1$
$(K=ann\,g_\pm)$. Then, Proposition \ref{theorem2} tells us that
$J_{\rm odd}\subseteq ker\,\Delta(\nabla^1)$. Accordingly (like in
the case of the Maslov classes \cite{V}), if we also take an
orthogonal $A$-connection $\nabla^0$ on $V$, we shall obtain
secondary characteristic classes corresponding to the ideals
$J_0=J_1=J_{\rm odd}$.

Following \cite{V}, Theorem 4.2.26, we may replace the algebra
$W(J_0,J_1)$ by the algebra
\begin{equation}\label{Maslov} \tilde{W}=
\mathds{R}[c_2,c_4,...]\otimes\mathds{R}[c'_2,c'_4,...]
\otimes\wedge(\hat{c}_1,\hat{c}_3,..),\end{equation} where
$c_\centerdot$ are the Chern polynomials and the accent and hat
indicate the place in the three factors of (\ref{Maslov}); the
homomorphism $\rho(\nabla^0,\nabla^1)$ is defined like on
$W(J_0,J_1)$, while using orthogonal and quasi-(skew)-metric
$A$-connections, respectively, and we get the same set of
characteristic classes. Then, by the same argument like for
\cite{V}, Theorem 4.4.37  we get
\begin{prop}\label{clsecFergen} The $A$-secondary characteristic
classes of $(V,g_\pm)$ are the real linear combinations of
cup-products of $A$-Pontrjagin classes of $V$ {\rm
\cite{F}} and classes of the form
\begin{equation}\label{clMV}
\mu_{2h-1}=[\Delta(\nabla^0,\nabla^1)c_{2h-1}]\in
H^{4h-3}(A).\end{equation}\end{prop}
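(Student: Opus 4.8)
The plan is to reduce the computation of the secondary characteristic classes to an explicit description of the cohomology of the model algebra $\tilde W$ in (\ref{Maslov}), and then to transport this description through the homomorphism $\rho^*(\nabla^0,\nabla^1)$. The key observation, carried over from the classical Maslov-class computation (\cite{V}, Theorem 4.4.37), is that the cohomology of $W(J_{\rm odd},J_{\rm odd})$ — equivalently of $\tilde W$ — is a free module over the algebra of principal classes, generated by the classes of the generators $\hat c_{2h-1}$ together with their products. So I would first record the structure of $\tilde W$ as a differential graded algebra: the differential sends $\hat c_{2h-1}$ to $[c_{2h-1}]_{J_1}-[c_{2h-1}]_{J_0}$, but since $c_{2h-1}\in J_{\rm odd}$ both factors vanish in $I/J_{\rm odd}$, so $d\hat c_{2h-1}=0$. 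Hence every generator $\hat c_{2h-1}$ is a cocycle, and the even Chern polynomials $c_{2h}$, $c'_{2h}$ are cocycles as well by construction.

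Next I would identify which of these cocycles are coboundaries. Because $d$ vanishes on all generators listed in (\ref{Maslov}), the differential of $\tilde W$ is in fact identically zero on the stated generators, so no nontrivial relations arise from coboundaries among them; the only subtlety is the identification of the two even factors $\mathds{R}[c_{2h}]$ and $\mathds{R}[c'_{2h}]$ in cohomology. Under $\rho$ both $[c_{2h}]_{J_0}$ and $[c_{2h}]_{J_1}$ map to the $A$-Pontrjagin classes $\Delta(\nabla^0)c_{2h}$ and $\Delta(\nabla^1)c_{2h}$, which by (\ref{Bott2conex}) represent the same $A$-cohomology class, namely the $h$-th $A$-Pontrjagin class of $V$. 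Thus in the image under $\rho^*$ the two even subalgebras collapse to the single algebra of $A$-Pontrjagin classes, and the remaining freedom is exactly the exterior algebra on the odd generators $\hat c_{2h-1}$.

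I would then apply the definitions (\ref{defrho}): $\rho^*$ sends the cohomology class of $\hat c_{2h-1}$ to the class $[\Delta(\nabla^0,\nabla^1)c_{2h-1}]$, which is precisely $\mu_{2h-1}$ of (\ref{clMV}), living in $H^{4h-3}(A)$ since $\deg\hat c_{2h-1}=2(2h-1)-1=4h-3$. Combining the two previous paragraphs, every class in the image of $\rho^*$ is a real linear combination of cup-products of $A$-Pontrjagin classes (from the even factors) with the classes $\mu_{2h-1}$ (from the odd factors), which is the assertion. The invariance of these classes under the choice of connections within their homotopy class is guaranteed by Corollary \ref{corolhomotopie}, together with the $J_{\rm odd}$-homotopy arguments established just before the statement: any two orthogonal connections are $J_{\rm odd}$-homotopic, and the $K$-flat quasi-(skew)-metric connection $\nabla^1$ satisfies $J_{\rm odd}\subseteq\ker\Delta(\nabla^1)$ by Proposition \ref{theorem2}.

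The main obstacle I anticipate is not the formal bookkeeping but the precise identification of the cohomology of $\tilde W$ — specifically, checking that the reduction from $W(J_{\rm odd},J_{\rm odd})$ to the smaller model $\tilde W$ of (\ref{Maslov}) preserves cohomology, and that no hidden coboundary relations identify the $\mu_{2h-1}$ with products of lower classes. This is where invoking the analogue of \cite{V}, Theorem 4.2.26 and Theorem 4.4.37 does the real work; the content of the present proposition is that those classical arguments transcribe verbatim once the Stokes formula (\ref{Stokes}) and the Bott formula (\ref{diffBott}) are available in the $A$-cohomology setting, both of which the excerpt has already established.
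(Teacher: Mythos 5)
Your proposal is correct and follows essentially the same route as the paper, which simply invokes the Lie algebroid analogues of \cite{V}, Theorems 4.2.26 and 4.4.37: pass to the model algebra $\tilde W$ of (\ref{Maslov}), observe that its differential vanishes (since $c_{2h-1}\in J_{\rm odd}$), identify the two even factors under $\rho^*$ via (\ref{Bott2conex}) with the $A$-Pontrjagin classes, and send the odd generators $\hat c_{2h-1}$ to the classes $\mu_{2h-1}$ of (\ref{clMV}). You spell out more of the bookkeeping than the paper does, and you correctly locate the only nontrivial step (that replacing $W(J_{\rm odd},J_{\rm odd})$ by $\tilde W$ yields the same characteristic classes) in the cited classical theorem.
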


The classes $\mu_{2h-1}$ will be called {\it simple $A$-secondary
characteristic classes}.
\begin{rem}\label{obsclFern} {\rm
If we start with an arbitrary vector bundle $(V,g_\pm)$, a
$K$-flat, quasi-(skew)-metric $A$-connection $\nabla^1$ may not
exist. Furthermore, if $\nabla^1$ exists, it may happen that all
the secondary characteristic classes vanish. For instance, if we
have a non-degenerate form $g_-$, a usual connection on the bundle
of $g_-$-canonical frames produces an $A$-connection $\nabla^1$
such that $\nabla^1 g_-=0$ and, since $K=0$, we get $A$-secondary
characteristic classes. Because of the $J_{\rm odd}$-homotopy of
orthogonal connections, these classes do not depend on the choice
of the orthogonal connection $\nabla^0$. Moreover, these classes
are independent of the skew-metric connection $\nabla^1$ because
of the existence of the link $(1-\tau)\nabla^1+\tau\nabla^{'1}$
between two such connections. But, the structure group of $V$ may
be reduced from the symplectic to the unitary group
\cite{V} and a unitary connection $\bar\nabla$ on $V$ will be skew-metric
and orthogonal simultaneously. From (\ref{defDelta}), and taking
$\nabla^0=\nabla^1=\bar\nabla$, we see that the secondary
characteristic classes above vanish.}\end{rem}
\section{Characteristic classes of morphisms}
Let $A$ be an arbitrary Lie algebroid on $M$, $V,W$ vector bundles
with the same basis $M$ and $\varphi:V\rightarrow W$ a morphism
over the identity on $M$. The $A$-connections $\nabla^V,\nabla^W$
on $V,W$, respectively, will be called $\varphi$-compatible if
$\nabla^W\circ\varphi=\varphi\circ\nabla^V$. An equivalent way to
characterize compatibility is obtained by considering the vector
bundle $S=V\oplus W^*$ , which is endowed with the $2$-forms
\begin{equation}\label{omegainS} g_\pm((v_1,\nu_1),(v_2,\nu_2))=
<\nu_2,\varphi(v_1)>\pm<\nu_1,\varphi(v_2)>,\end{equation}
$v_1,v_2\in V,\, \nu_1,\nu_2\in W^*$. It suffices to work with one
of these forms, but it is nice to mention that both may be used with
the same effect. The pair of $A$-connections $\nabla^V,\nabla^W$
produces an $A$-connection $\nabla^S=\nabla^V\oplus\nabla^{W^*}$ on
$S$, where $\nabla^{W^*}$ is defined by
$$<\nabla^{W^*}_a\nu,v>=(\sharp_Aa)<\nu,v>-<\nu,\nabla^W_av>,\hspace{3mm}\nu\in
W^*,v\in V.$$ A straightforward calculation shows that
$\nabla^V,\nabla^W$ are $\varphi$-compatible iff either
$\nabla^Sg_+=0$ or $\nabla^Sg_-=0$. We also notice that the forms
$g_\pm$ have the same annihilator
\begin{equation}\label{KinS} K=ker\,\varphi\times
ker\,^t\varphi\end{equation} where the index $t$ denotes
transposition.
\begin{prop}\label{FerinS}
If $V=A$, if $W=A'$ is a second Lie algebroid and if $\varphi$ is
a base-preserving Lie algebroid morphism, then there exist
$K$-flat, $\varphi$-compatible $A$-connections
$(\nabla,\nabla')$.\end{prop}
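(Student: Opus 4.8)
The plan is to manufacture $\nabla$ and $\nabla'$ out of the brackets of $A$ and $A'$, so that the curvature vanishing demanded by Proposition \ref{theorem2} is produced by the Jacobi identity, exactly as in Fernandes' single-algebroid case. Two structural consequences of the morphism hypothesis are the backbone. Since $\varphi$ is base-preserving, $\sharp_A=\sharp_{A'}\circ\varphi$, whence $ker\,\varphi\subseteq ker\,\sharp_A$; and since $\varphi$ respects brackets, $ker\,\varphi$ is an ideal of $A$ (if $\varphi(k)=0$ then $\varphi[a,k]_A=[\varphi(a),0]_{A'}=0$) while $im\,\varphi$ is a subalgebroid ($[\varphi(a),\varphi(b)]_{A'}=\varphi[a,b]_A$). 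I would start from the basic connection $\nabla_ab=[a,b]_A+\nabla^0_{\sharp_Ab}a$ associated with any ordinary connection $\nabla^0$ on the vector bundle $A$; the Leibniz rules (\ref{propconex}) follow by a direct computation.

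First I would settle the $V=A$ part of $K$-flatness. For $k\in ker\,\varphi$ one has $\sharp_Ak=0$, so the $\nabla^0$-term drops and $\nabla_ak=[a,k]_A$, which remains in $ker\,\varphi$ by the ideal property. Thus $\nabla$ restricts on $ker\,\varphi$ to the adjoint action $a\mapsto[a,\,\cdot\,]_A$, and $R_\nabla(a_1,a_2)k=[a_1,[a_2,k]_A]_A-[a_2,[a_1,k]_A]_A-[[a_1,a_2]_A,k]_A=0$ by Jacobi. In particular $ker\,\varphi$ is $\nabla$-invariant, so $\nabla$ descends to $A/ker\,\varphi\cong im\,\varphi$ and the prescription $\nabla'_a\varphi(b):=\varphi(\nabla_ab)$ defines an $A$-connection on the subalgebroid $im\,\varphi$; by construction this is the $\varphi$-compatibility condition $\nabla'\circ\varphi=\varphi\circ\nabla$ (equivalently $\nabla^Sg_\pm=0$).

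It remains to extend $\nabla'$ to all of $A'$ while controlling the $W^*$-part $ker\,{}^t\varphi=ann(im\,\varphi)$. Here the decisive observation is that $im\,\varphi$ being bracket-closed lets the pure bracket action descend to a \emph{flat} $A$-connection on the quotient: $\bar\nabla'_a[w]:=[\,[\varphi(a),w]_{A'}\,]$ is well defined on $A'/im\,\varphi$ (changing $w$ by $\varphi(b)$ alters $[\varphi(a),w]_{A'}$ by $\varphi[a,b]_A\in im\,\varphi$), satisfies the Leibniz rule, and is flat by the same Jacobi identity as above. I would then assemble $\nabla'$ from a splitting $A'=im\,\varphi\oplus\,C$ as a connection that preserves $im\,\varphi$, restricting to the push-forward connection $\varphi(\nabla_a(\,\cdot\,))$ there and inducing $\bar\nabla'$ on $A'/im\,\varphi$. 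Since $im\,\varphi$ is $\nabla'$-invariant and the quotient curvature $R_{\bar\nabla'}$ vanishes, $R_{\nabla'}(a_1,a_2)$ carries all of $A'$ into $im\,\varphi$; as $ker\,{}^t\varphi=ann(im\,\varphi)$, the dual curvature $R_{(\nabla')^{*}}(a_1,a_2)$ therefore annihilates $ker\,{}^t\varphi$. Together with the $A$-part this gives $R_{\nabla^S}|_K=0$ for $\nabla^S=\nabla\oplus(\nabla')^{*}$, i.e.\ $K$-flatness.

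The main obstacle is regularity rather than algebra: the clean assembly above uses that $ker\,\varphi$ and $im\,\varphi$ are subbundles, that $A'/im\,\varphi$ is a bundle, and that a smooth splitting exists, all of which presuppose that $\varphi$ has locally constant rank. The pointwise content—the two Jacobi flatness statements and the inclusion $ker\,\varphi\subseteq ker\,\sharp_A$ that simultaneously kills the $\nabla^0$-term on $ker\,\varphi$ and guarantees $\nabla$-invariance—is robust, so I expect the bulk of the work to lie in producing a globally smooth compatible pair where $rank\,\varphi$ jumps (e.g.\ by a careful choice of $\nabla'$ on a complement, or by extending from the regular locus by continuity), and in verifying that the assembled $\nabla'$ indeed realizes both the compatibility on $im\,\varphi$ and the flat bracket action on the quotient.
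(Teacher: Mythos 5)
Your treatment of the $A$-side is sound and close in spirit to the paper's: the paper defines $\nabla^U_{b_i}b_j=[b_i,b_j]_A$ on trivializing neighborhoods and glues with a partition of unity, obtaining exactly your property $\nabla_vk(x)=[\tilde v,k]_A(x)$ on sections of $ker\,\varphi$ (formula (\ref{propluinabla})), after which flatness on $ker\,\varphi$ is the Jacobi identity; your global basic connection $\nabla_ab=[a,b]_A+\nabla^0_{\sharp_Ab}a$ is an equally valid (arguably cleaner) way to get the same property. The genuine gap is on the $A'$-side. Your construction of $\nabla'$ rests on the splitting $A'=im\,\varphi\oplus C$, on $A'/im\,\varphi$ being a vector bundle, and on the pushforward connection on "the subbundle $im\,\varphi$" — all of which require $\varphi$ to have locally constant rank. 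That is not a hypothesis of the proposition, and it fails in the motivating examples (already for $\varphi=\sharp_A:A\rightarrow TM$, the case recovering Fernandes' classes, the rank jumps for a non-regular algebroid; the paper only deduces constancy of $rank\,\varphi$ \emph{along leaves}, in Remark \ref{obsrankconst}, as a consequence of the construction). You flag this obstacle yourself but leave it unresolved, so the proof is incomplete precisely where the statement has content.

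The paper's way around it is to never split $A'$: it defines $\nabla'$ on \emph{all} of $A'$ at once by $\nabla^{'U}_{b_i}b'_u=[\varphi b_i,b'_u]_{A'}$ and glues by a partition of unity. No subbundle structure on $im\,\varphi$ is needed, because the $K$-flatness to be verified only concerns the dual connection on sections $\alpha'$ of $ker\,^t\varphi=ann\,(im\,\varphi)$, and for such sections one has the identity $<\nabla^{'*}_v\alpha'(x),a'(x)>=(\sharp_Av)<\alpha',a'>-<\alpha'(x),[\varphi\tilde v,a']_{A'}(x)>$ (formula (\ref{propluinabla'})) \emph{regardless} of how $\nabla'$ acts on any complement of $im\,\varphi$; iterating it, the curvature of $\nabla^{'*}$ on $ker\,^t\varphi$ vanishes by the Jacobi identity in $A'$, exactly as you intended via the quotient connection $\bar\nabla'$ — the paper simply works with annihilators of sections instead of forming the (possibly non-existent) quotient bundle. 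Note also that your own starting point completes globally with no partition of unity and no splitting: $\nabla'_aa'=[\varphi a,a']_{A'}+\varphi(\nabla^0_{\sharp_{A'}a'}a)$ is $C^\infty(M)$-linear in $a$, satisfies $\nabla'_a(\varphi b)=\varphi(\nabla_ab)$ for your $\nabla$, and its dual obeys the displayed identity on $ann\,(im\,\varphi)$; this would repair the argument without any constant-rank assumption.
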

\begin{proof} We may proceed like in \cite{F}. Take a neighborhood
of $M$ where $\Gamma A,\Gamma A'$ have the fixed local bases
$(b_i),(b'_u)$. Define local $A$-connections
$\nabla^U,\nabla^{'U}$ by asking that
\begin{equation}\label{conexpeU} \nabla^U_{b_i}b_j=[b_i,b_j]_A,\;
\nabla^{'U}_{b_i}b'_u=[\varphi b_i,b'_u]_{A'},\end{equation}
then, extending the operators to arbitrary local cross sections in
accordance with the properties of a connection. Using the local
expression $\varphi b_i=\varphi_i^ub'_u$, it is easy to check that
$\varphi\circ\nabla^U=\nabla^{'U}\circ\varphi$. If we consider a
locally finite covering $\{U_\sigma\}$ of $M$ by such
neighborhoods $U$ and glue up the local connections by a
subordinated partition of unity $\{\theta_\sigma\in
C^\infty(M)\}$, we get $\varphi$-compatible, global
$A$-connections $\nabla,\nabla'$ defined by
\begin{equation}\label{lipireconex}\nabla_va(x)=\sum_{x\in
U_\sigma}\theta_\sigma(x)\nabla^{U_\sigma}_va(x),\;
\nabla'_va'(x)=\sum_{x\in
U_\sigma}\theta_\sigma(x)\nabla^{'U_\sigma}_va'(x),\end{equation}
where $x\in M,v\in A_x,a\in\Gamma A,a'\in\Gamma A'$.

Now, we notice that the local connections (\ref{conexpeU}) satisfy
the following properties
\begin{equation}\label{propcloc} \nabla^U_{b_i}a=[b_i,a]_A,\;
\nabla^{'U}_{b_i}a'=[\varphi b_i,a']_{A'}.\end{equation} Indeed, if
we put $a=f^jb_j,a'=h^ub'_u$, (\ref{conexpeU}) and the properties of
the Lie algebroid bracket imply (\ref{propcloc}). Furthermore, using
(\ref{propcloc}), it is easy to check the following properties of
the global compatible connections (\ref{lipireconex})
\begin{equation}\label{propluinabla} \nabla_vk(x)=[\tilde{v},k]_A(x),
\end{equation}
\begin{equation}\label{propluinabla'} <\nabla^{'*}_v\alpha'(x),a'(x)>
=(\sharp_Av)<\alpha',a'>-<\alpha'(x),[\varphi\tilde{v},a']_{A'}(x)>,\end{equation}
$\forall x\in M,k\in\Gamma(ker\,\varphi),a'\in\Gamma
A',\alpha'\in\Gamma(ker\,^t\varphi)$ and $\tilde{v}=\nu^ib_i$ is a
cross section of $\Gamma A$ that extends $v\in A_x$. The
restrictions put on $k,\alpha'$ ensure the correctness of the
passage from the covariant derivative to the Lie algebroid bracket
and the independence of the result on the choice of $\tilde{v}$.
Formulas (\ref{propluinabla}), (\ref{propluinabla'}) imply
$\varphi(\nabla_vk)=0$, $\nabla^{'*}_v\alpha'\circ\varphi=0$, which
means that $ker\,\varphi$ and $ker\,^t\varphi$ are preserved by the
connections $\nabla,\nabla'$, respectively.

Finally, if we denote $S=A\oplus A^{'*}$ and
$\nabla^S=\nabla\oplus\nabla^{'*}$, we can compute the curvature
$[R_{\nabla^S}(a_1,a_2)(k,\alpha')](x)$, which has components on $A$
and $A^{'*}$. The component on $A$ is
$$(\nabla_{a_1}\nabla_{a_2}-\nabla_{a_2}\nabla_{a_1}
-\nabla_{[a_1,a_2]_A})\tilde{k}(x)\stackrel{(\ref{propluinabla})}{=}
([\tilde{a}_1,[\tilde{a}_2,\tilde{k}]_A]_A$$ $$-
[\tilde{a}_2,[\tilde{a}_1,\tilde{k}]_A]_A-
[[\tilde{a}_1,\tilde{a}_2]_A,\tilde{k}]_A)(x)=0,$$ where tilde
denotes extensions to cross sections and the final result holds
because of the Jacobi identity. For the component on $A^{'*}$ we
get the following evaluation on any $a'\in\Gamma A'$:
$$<(\nabla^{'*}_{a_1}\nabla^{'*}_{a_2}-\nabla^{'*}_{a_2}\nabla^{'*}_{a_1}
-\nabla^{'*}_{[a_1,a_2]_A})\tilde{\alpha}',a'>(x)
\stackrel{(\ref{propluinabla'})}{=}<\tilde{\alpha}',
[\varphi\tilde{a}_2,[\varphi\tilde{a}_1,\tilde{k}]_{A'}]_{A'}$$ $$-
[\varphi\tilde{a}_1,[\varphi\tilde{a}_2,\tilde{k}]_{A'}]_{A'}-
[[\varphi\tilde{a}_1,\varphi\tilde{a}_2]_{A'},\tilde{k}]_{A'}>(x)=0,$$
where the annulation is justified by the Jacobi identity again.
Therefore, $$[R_{\nabla^S}(a_1,a_2)(k,\alpha')](x)=0,$$ which is the
meaning of $K$-flatness.
\end{proof}
\begin{rem}\label{obsrankconst} {\rm During the proof of Proposition \ref{FerinS} we
saw that $ker\,\varphi$ is preserved by $\nabla$, hence, it is
preserved by the parallel translation along the paths in the leaves
$L$ of $A$. This shows that $rank\,\varphi$ is constant along the
leaves $L$.}\end{rem}
\begin{rem}\label{nouindistins} {\rm If we use the
definition of $\nabla^{'*}$ in the left hand side of
(\ref{propluinabla'}) and take into account the relation
$ann\,ker\,^t\varphi=im\,\varphi$ we obtain the following equivalent
form of (\ref{propluinabla'}):
\begin{equation}\label{eqpropluinabla'}
\nabla'_va'(x)=[\varphi\tilde v,a']_{A'}(x)\;({\rm
mod.}\,im\,\varphi_x)\hspace{2mm}\forall x\in M,v\in A_x.
\end{equation}}\end{rem}
\begin{defin}\label{defdistins} {\rm
A pair of $\varphi$-compatible $A$-connections that satisfy the
properties (\ref{propluinabla}), (\ref{eqpropluinabla'}) will be
called a {\it distinguished pair} (in \cite{F} one uses the term
basic connections).}\end{defin}

Now, we see that we may use Proposition \ref{clsecFergen} in order
to get secondary characteristic classes for the bundle $S=A\oplus
A^{'*}$ endowed with the quasi-(skew)-metrics (\ref{omegainS}), with
a connection $\nabla^1=\nabla\oplus\nabla^{'*}$, where
$(\nabla,\nabla')$ is a distinguished pair of $A$-connections, and
with an orthogonal connection
$\nabla^0=\nabla^{g_A}\oplus\nabla^{g_{A'}*}$, where $g_A,g_{A'}$
are metrics on the bundles $A,A'$ and $\nabla^{g_A},\nabla^{g_{A'}}$
are corresponding orthogonal connections on $A,A'$.
\begin{defin}\label{defclmor} {\rm The above constructed
secondary characteristic classes of $A\oplus A^{'*}$ will be called
the {\it characteristic classes} of the base-preserving morphism
$\varphi$. In particular, one has the {\it simple characteristic
classes} $\mu_{2h-1}(\varphi)\in H^{4h-3}(A)$.}\end{defin}

The secondary characteristic classes of the Lie algebroid $A$
defined in \cite{F} are the simple characteristic classes of the
morphism $\varphi=\sharp_A:A\rightarrow TM$.
\begin{prop}\label{isomorfism} All the characteristic classes
of a base-preserving isomorphism $\varphi:A\rightarrow A'$ are
zero.\end{prop}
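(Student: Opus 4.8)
The plan is to reduce the statement to the vanishing mechanism already recorded in Remark \ref{obsclFern}: an isomorphism forces the forms $g_\pm$ to be non-degenerate, and a non-degenerate skew form carries a connection that is orthogonal and symplectic at once, so that the two connections $\nabla^0,\nabla^1$ of Definition \ref{defclmor} may be taken equal.

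First I would note that, since $\varphi$ is a base-preserving isomorphism, so is its transpose, whence $\ker\varphi=0$ and $\ker\,{}^t\varphi=0$. By (\ref{KinS}) the common annihilator is $K=0$, so the forms $g_\pm$ of (\ref{omegainS}) on $S=A\oplus A^{'*}$ are non-degenerate; in particular $g_-$ is a symplectic structure on $S$ (consistently, $rank\,A=rank\,A'$, so $rank\,S$ is even). I will work with $g_-$.

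Next I would invoke the argument of Remark \ref{obsclFern}. Since $g_-$ is non-degenerate, the structure group of $S$ reduces from the symplectic to the unitary group, and a unitary $A$-connection $\bar\nabla$ on $S$ is simultaneously orthogonal (for a compatible metric) and symplectic, i.e.\ $\bar\nabla g_-=0$. Being orthogonal, $\bar\nabla$ is admissible as the connection $\nabla^0$; being skew-metric with $K=0$, Proposition \ref{theorem2} gives $J_{\rm odd}\subseteq ker\,\Delta(\bar\nabla)$, so $\bar\nabla$ is equally admissible as the connection $\nabla^1$. Now the $J_{\rm odd}$-homotopy invariance of the classes applies: any two orthogonal connections are $J_{\rm odd}$-homotopic, and the distinguished connection $\nabla^1$ (which satisfies $\nabla^1 g_-=0$) is joined to $\bar\nabla$ by the skew-metric link $(1-\tau)\nabla^1+\tau\bar\nabla$. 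Hence, by Corollary \ref{corolhomotopie} (through Proposition \ref{invhomot}), I may replace both the orthogonal connection and the distinguished connection of Definition \ref{defclmor} by the single connection $\bar\nabla$ without changing the characteristic classes of $\varphi$.

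Finally, with $\nabla^0=\nabla^1=\bar\nabla$ the connection difference $D=\nabla^1-\nabla^0$ vanishes, so its local matrix $\alpha$ is zero and formula (\ref{defDelta}) yields $\Delta(\bar\nabla,\bar\nabla)c_{2h-1}=0$; thus every simple class $\mu_{2h-1}(\varphi)$ of (\ref{clMV}) vanishes. Since by Proposition \ref{clsecFergen} each secondary characteristic class is a real linear combination of cup-products involving these $\mu_{2h-1}$ factors, all the characteristic classes of $\varphi$ are zero. The only step that needs genuine care is the middle one, namely verifying that a single unitary connection may legitimately play both roles and that the homotopy replacements leave the classes unaltered; once this is secured, the vanishing is immediate from (\ref{defDelta}).
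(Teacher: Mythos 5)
Your proof is correct and follows exactly the route the paper takes: the paper's own proof is the one-line observation that an isomorphism makes $g_-$ non-degenerate and then defers entirely to Remark \ref{obsclFern}, whose content (reduction to the unitary group, a connection $\bar\nabla$ that is simultaneously orthogonal and skew-metric, the $J_{\rm odd}$-homotopies, and the vanishing of (\ref{defDelta}) when $\nabla^0=\nabla^1$) is precisely what you spell out. Your version merely makes the details of that remark explicit.
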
 \begin{proof} If $\varphi$ is an isomorphism, then
$g_-$ is non degenerate and we are in the situation discussed in
Remark \ref{obsclFern}.\end{proof}

Thus, the characteristic classes of a morphism may be seen as a
measure of its non-isomorphic character.
\begin{prop}\label{invarlaconex} The characteristic
classes of a base preserving morphism $\varphi:A\rightarrow A'$ of
Lie algebroids do not depend on the choice of the orthogonal
connection and of the distinguished pair of compatible connections
required by their definition.\end{prop}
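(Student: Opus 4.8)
The plan is to reduce the independence statement to the homotopy invariance of secondary characteristic classes recorded in Corollary \ref{corolhomotopie}. Since the classes of $\varphi$ are, by Definition \ref{defclmor}, those of the quasi-(skew)-metric bundle $(S,g_\pm)$ with $S=A\oplus A^{'*}$, built with the ideals $J_0=J_1=J_{\rm odd}$, it suffices to show that changing either the orthogonal connection $\nabla^0$ or the connection $\nabla^1=\nabla\oplus\nabla^{'*}$ coming from the distinguished pair replaces it by a $J_{\rm odd}$-homotopic connection. I would treat the two choices separately and then combine them.

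For the orthogonal connection $\nabla^0=\nabla^{g_A}\oplus\nabla^{g_{A'}*}$, I would simply invoke the fact, already established in Section 2 just after Corollary \ref{corolhomotopie}, that any two orthogonal $A$-connections on a bundle are $J_{\rm odd}$-homotopic: a chain of orthogonal links handles both a change of orthogonal connection for a fixed metric and a change of the underlying metric. Hence any two admissible choices of $\nabla^0$ are $J_{\rm odd}$-homotopic, and Corollary \ref{corolhomotopie} applies directly.

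The substantive part concerns the distinguished pair. Let $(\nabla,\nabla')$ and $(\bar\nabla,\bar\nabla')$ be two distinguished pairs, yielding $\nabla^1=\nabla\oplus\nabla^{'*}$ and $\bar\nabla^1=\bar\nabla\oplus\bar\nabla^{'*}$. The key observation is that the defining relations (\ref{propluinabla}) and (\ref{eqpropluinabla'})---equivalently (\ref{propluinabla'}) by Remark \ref{nouindistins}---together with $\varphi$-compatibility are all \emph{affine} in the connection, and are in fact equalities satisfied identically by each pair. Consequently, for every $\tau\in I$ the convex combination $(\nabla^{(\tau)},\nabla^{'(\tau)})=((1-\tau)\nabla+\tau\bar\nabla,\,(1-\tau)\nabla'+\tau\bar\nabla')$ again satisfies (\ref{propluinabla}), (\ref{eqpropluinabla'}) and $\varphi$-compatibility, so it is itself a distinguished pair in the sense of Definition \ref{defdistins}. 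Since the dualization $\nabla'\mapsto\nabla^{'*}$ is also affine, the associated connection on $S$ is precisely the convex-combination link $(1-\tau)\nabla^1+\tau\bar\nabla^1=\nabla^{(\tau)}\oplus(\nabla^{'(\tau)})^{*}$ joining $\nabla^1$ to $\bar\nabla^1$. It then remains to note that each member of this link annihilates $J_{\rm odd}$ under $\Delta$: the curvature computation in the proof of Proposition \ref{FerinS} uses only the distinguished-pair identities (\ref{propluinabla}), (\ref{propluinabla'}) and the Jacobi identity, so it shows that \emph{any} distinguished pair produces a $K$-flat connection on $(S,g_\pm)$, and Proposition \ref{theorem2} then gives $J_{\rm odd}\subseteq ker\,\Delta(\nabla^{(\tau)}\oplus(\nabla^{'(\tau)})^{*})$ for all $\tau$. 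Thus this single link is a $J_{\rm odd}$-homotopy between $\nabla^1$ and $\bar\nabla^1$, and Corollary \ref{corolhomotopie} again yields invariance. Combining the two parts completes the argument.

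I expect the only genuine obstacle to be the bookkeeping in re-reading the curvature computation of Proposition \ref{FerinS} as depending \emph{solely} on the distinguished-pair identities, so that $K$-flatness is recognized as a property of every distinguished pair rather than only of the partition-of-unity connection constructed there; once that is granted, the affine-preservation of the defining relations and of dualization is routine, and no new link needs to be constructed beyond the straight-line one.
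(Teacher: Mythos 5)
Your proposal is correct and follows essentially the same route as the paper's proof: reduce to Corollary \ref{corolhomotopie}, use the $J_{\rm odd}$-homotopy of orthogonal connections from Section 2, and observe that the convex combination of two distinguished pairs is again a distinguished pair, giving a $J_{\rm odd}$-homotopy of the induced connections on $S$. You merely make explicit a step the paper leaves implicit, namely that the $K$-flatness computation in Proposition \ref{FerinS} uses only the distinguished-pair identities and hence applies to every member of the link.
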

\begin{proof} The proposition is a consequence of Corollary
\ref{corolhomotopie}. In the previous section we have seen that two
orthogonal $A$-connections are $J_{\rm odd}$-homotopic. On the other
hand, take two $\varphi$-distinguished pairs of $A$-connections
$\nabla,\nabla';\tilde{\nabla},\tilde{\nabla}'$. Then, it is easy to
check that, $\forall t\in[0,1]$,
$(1-t)\nabla+t\tilde{\nabla},(1-t)\nabla'+t\tilde{\nabla}'$ is a
$\varphi$-distinguished pair again. Therefore, $J_{\rm
odd}$-homotopy also holds for the corresponding quasi-(skew)-metric
connections on $S$ and we are done.\end{proof}

We also have another consequence of Corollary \ref{corolhomotopie}:
\begin{prop}\label{morfismehomotope} Two homotopic, base-preserving
morphisms $\varphi_0,\varphi_1:A\rightarrow A'$ of Lie algebroids
have the same secondary characteristic classes.\end{prop}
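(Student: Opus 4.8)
The plan is to reduce the statement to Corollary \ref{corolhomotopie} by upgrading a homotopy of morphisms to a $J_{\rm odd}$-homotopy of the connections that define the classes. First I would make the notion of homotopy explicit: a homotopy between $\varphi_0$ and $\varphi_1$ amounts to a base-preserving Lie algebroid morphism $\tilde{\varphi}:A\times TI\to A'\times TI$ over $M\times I$ (with $I=\{0\leq\tau\leq1\}$) whose restrictions at $\tau=0,1$ are $\varphi_0,\varphi_1$. A short computation with the product anchors and brackets shows that such a $\tilde{\varphi}$ is equivalent to a smooth family $\varphi_\tau$ of base-preserving morphisms together with a family $\chi_\tau\in\Gamma(ker\,\sharp_{A'})$ satisfying an evolution equation of the form $\partial_\tau\varphi_\tau=[\varphi_\tau,\chi_\tau]_{A'}$; this $\chi_\tau$ is the infinitesimal generator of the homotopy and will play the decisive role below.

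Next I would run Fernandes' construction of Proposition \ref{FerinS} over the base $M\times I$, while keeping the bundle $S=A\oplus A'^{*}$ pulled back to $M\times I$ rather than enlarging it. Concretely, in the $A$-directions I define the $\mathcal{A}$-connection ($\mathcal{A}=A\times TI$) on $\pi_1^{-1}(S)$ by the bracket formulas (\ref{conexpeU})--(\ref{lipireconex}) applied to $\varphi_\tau$ at each fixed $\tau$, so that the $\tau$-slice is exactly a distinguished pair for $\varphi_\tau$ in the sense of Definition \ref{defdistins}. In the $\partial_\tau$-direction I set the connection to vanish on the $A$-summand and to be of the form $\tilde{\nabla}'_{\partial_\tau}a'=[\chi_\tau,a']_{A'}$ on the $A'$-summand (dualized on $A'^{*}$), the sign being fixed by the compatibility check. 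By Proposition \ref{conectlinks} this $\mathcal{A}$-connection is the same datum as a link joining a distinguished pair for $\varphi_0$ to one for $\varphi_1$.

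The two verifications that make this work are that the resulting link is quasi-(skew)-metric and $K$-flat for the $\tau$-dependent form $\tilde{g}_\pm$ induced by $\tilde{\varphi}$ via (\ref{omegainS}). Preservation of $\tilde{g}_\pm$ in the $b_i$-directions is inherited from the $\varphi_\tau$-compatibility of each slice, while preservation in the $\partial_\tau$-direction reduces exactly to the evolution equation $\partial_\tau\varphi_\tau=[\varphi_\tau,\chi_\tau]_{A'}$, which is why $\chi_\tau$ must enter the $\partial_\tau$-component of the connection. $K$-flatness in the $b_i$-directions is the content of Proposition \ref{FerinS}; $K$-flatness in the directions involving $\partial_\tau$ follows from a Jacobi-identity computation of the same type, now also using $\chi_\tau\in ker\,\sharp_{A'}$ so that the bracket-to-derivative passage is legitimate. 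Granting these, Proposition \ref{theorem2} gives $J_{\rm odd}\subseteq ker\,\Delta$ for the link, so it is a $J_{\rm odd}$-homotopy in the required sense.

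Finally I would assemble the conclusion. The orthogonal connection $\nabla^0=\nabla^{g_A}\oplus\nabla^{g_{A'}*}$ does not depend on $\varphi$, so it may be chosen identical for $\varphi_0$ and $\varphi_1$; the quasi-(skew)-metric connections $\nabla^1$ are $J_{\rm odd}$-homotopic through the link just built, invoking Proposition \ref{invarlaconex} to disregard the particular choice of distinguished pairs at the endpoints. Corollary \ref{corolhomotopie} then yields equality of the secondary characteristic classes. The main obstacle is precisely the $\partial_\tau$-direction analysis of the previous paragraph: identifying the correct $\chi_\tau$-term and checking that it simultaneously produces $\tilde{g}_\pm$-compatibility and $K$-flatness, since the naive choice of a vanishing $\partial_\tau$-component fails because $\tilde{g}_\pm$ genuinely varies with $\tau$.
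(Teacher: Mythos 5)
Your route is genuinely different from the paper's, and the difference matters. The paper defines ``homotopic'' to mean simply that $\varphi_0,\varphi_1$ are joined by a differentiable family $\varphi_\tau$ of base-preserving morphisms; it then takes the corresponding family of distinguished connections $\nabla^{1,\tau}$ (i.e., essentially the link with vanishing $\partial_\tau$-component, via Proposition \ref{conectlinks}) and observes that, although the forms $g_{+,\tau}$ on $S$ vary, every $\nabla^{1,\tau}$ retains the skew-symmetric local connection and curvature matrices responsible for the vanishing of the odd Chern forms; hence the chain is a $J_{\rm odd}$-homotopy and Corollary \ref{corolhomotopie} applies. The point you single out as the ``main obstacle'' --- that a link with zero $\partial_\tau$-component cannot preserve the $\tau$-dependent form $\tilde g_\pm$ --- is true but not the obstruction you think it is: a $J_{\rm odd}$-homotopy only requires $J_{\rm odd}\subseteq \ker\Delta$ for the link, not that the link be quasi-(skew)-metric for a single form on $M\times I$, and the paper obtains the former from the pointwise algebraic structure of the matrices rather than by invoking Proposition \ref{theorem2} over $M\times I$.

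Your alternative --- requiring the homotopy to be a Lie algebroid morphism $A\times TI\rightarrow A'\times TI$, extracting a generator $\chi_\tau\in\Gamma(\ker\sharp_{A'})$, and inserting $[\chi_\tau,\cdot]_{A'}$ into the $\partial_\tau$-component so that the link genuinely preserves $\tilde g_\pm$ and is $K$-flat --- is internally coherent and lets you quote Proposition \ref{theorem2} verbatim on $M\times I$, which is conceptually cleaner. But it costs generality: your notion of homotopy is strictly stronger than the paper's. A generic differentiable family $\varphi_\tau$ of morphisms admits no such generator (already for bundles of Lie algebras with zero anchor the evolution equation forces the family to move by inner derivations), so as written you prove the proposition only for this restricted class of homotopies, and the statement in the sense the paper intends is not covered. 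If you keep your route, either state the stronger hypothesis explicitly or supplement it with the paper's pointwise skew-symmetry argument to handle arbitrary smooth families.
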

\begin{proof} By homotopic morphisms we understand  morphisms
$\varphi_0,\varphi_1$ that are linked by a differentiable family
of morphisms $\varphi_\tau:A\rightarrow A'$ $(0\leq\tau\leq1)$.
The corresponding forms $g_{+,\tau}$ on $S=A\oplus A^{'*}$ are
different, but, still, all the connections $\nabla^{1,\tau}$
required in the construction of the secondary classes have
skew-symmetric local connection and curvature matrices. Therefore,
the $J_{\rm odd}$-homotopy holds and we are done.\end{proof}
\begin{rem}\label{clasebicaract} {\rm
In the case of an arbitrary pair of morphisms
$\varphi_0,\varphi_1:A\rightarrow A'$ we can measure the
difference between the secondary characteristic classes as
follows. Notice the existence of the {\it bi-characteristic
classes}
$\bar{\mu}_{2h-1}(\varphi_1,\varphi_2)=[\Delta(\nabla^1,\nabla^2)c_{2h-1}]\in
H^{4h-3}(A)$ where $\nabla^1,\nabla^2$ are $A$-connections defined
on $S=A\oplus A^{'*}$ by distinguished, $\varphi_{1,2}$-compatible
connections respectively. Then, formula (\ref{diffBott}) yields
$$d_A\Delta(\nabla^0,\nabla^1,\nabla^2)c_{2h-1}=\Delta(\nabla^0,\nabla^1)c_{2h-1}
+\Delta(\nabla^1,\nabla^2)c_{2h-1}+\Delta(\nabla^2,\nabla^0)c_{2h-1},$$
where $\nabla^0$ is an orthogonal connection on $S$. Accordingly,
we get
\begin{equation}\label{claseperechi} {\mu}_{2h-1}(\varphi_1)-
{\mu}_{2h-1}(\varphi_2)=
\bar{\mu}_{2h-1}(\varphi_1,\varphi_2).\end{equation}}\end{rem}

In what follows we give explicit local expressions of $A$-forms
that represent the characteristic classes $\mu_{2h-1}(\varphi)$.
Take a point $x\in M$ and an open neighborhood $U$ of $x$
diffeomorphic to a ball. Assume that $(\nabla^U,\nabla^{'U})$ and
$(\nabla,\nabla')$ are pairs of local, respectively global,
distinguished, $\varphi$-compatible $A$-connections on $A,A'$.
Then, if $0\leq\chi\in C^\infty(M)$ is equal to $1$ on the compact
closure $\bar V$ of the open neighborhood $V\subseteq U$ of $x$
and equal to $0$ on $M\backslash U$, then the convex combinations
$$\bar{\nabla}=\chi\nabla^U+(1-\chi)\nabla,\,
\bar{\nabla}'=\chi\nabla^{'U}+(1-\chi)\nabla'$$
define a global pair of distinguished $A$-connections that
coincides with $(\nabla^U,\nabla^{'U})$ on $V$.

Accordingly, in formula (\ref{clMV}) for $S=A\oplus A^{'*}$ we may
always use a connection $\nabla^1$ such that the expressions
(\ref{conexpeU}) hold on the neighborhood $V$. Then, if we denote
\begin{equation}\label{expresiiloc1} \begin{array}{c}
[b_i,b_j]_A=\gamma_{ij}^kb_k,\,
[b'_u,b'_v]_{A'}=\gamma_{uv}^{'w}b'_w,\vspace{2mm}\\
\sharp_Ab_i=\rho_i^j\frac{\partial}{\partial x^j},\,
\sharp_{A'}b'_u=\rho_u^{'j}\frac{\partial}{\partial x^j}
\,\varphi(b_i)=\varphi_i^sb'_s\end{array}\end{equation}
(remember that we use the Einstein summation convention), where
$x^i$ are local coordinates on $M$ and $(b_i)(,b'_u)$ are the
bases used in (\ref{conexpeU}), we get the following connection
matrix of $\nabla^1$ on the neighborhood $V$
\begin{equation}\label{expresiiloc2} \left( \begin{array}{cc}
\gamma_{ij}^kb^{*i}&0\vspace{2mm}\\
0&(-\varphi_i^t\gamma_{tu}^{'s}+\rho^{'j}_u
\frac{\partial\varphi_i^s}{\partial x^j})b^{*i}\end{array}\right)\end{equation}
(in (\ref{expresiiloc2}), $b^{*i}$ is the dual basis of $b_i$).

Furthermore, let $g^U,g^{'U}$ be local metrics on $A,A'$ such that
$(b_i),(b'_u)$ are orthonormal bases and $g,g'$ arbitrary, global
metrics on $A,A'$. Then, define the metrics
$$\chi g^U+(1-\chi)g,\,\chi g^{'U}+(1-\chi)g'$$ and take
an orthogonal connection $\nabla^0$ whose components are
corresponding orthogonal connections. The connection matrix of
$\nabla^0$ on the neighborhood $V$, with respect to the same local
bases like in (\ref{expresiiloc2}), will be of the form
\begin{equation}\label{expresiiloc3} \left(
\begin{array}{cc}\varpi_i^j&0\vspace{2mm}\\ 0&-\varpi_s^{'t}
\end{array}\right),\end{equation} where
$(\varpi_i^j),(\varpi_s^{'t})$ are skew-symmetric matrices of
local $1$-$A$-forms.

If these connections $\nabla^0,\nabla^1$ are used, then, along
$V$, the difference matrix $\alpha$ of formula (\ref{defDelta}) is
the difference between the matrices (\ref{expresiiloc2}) and
(\ref{expresiiloc3}). Furthermore, we can compute the matrix
$\Omega_{(\tau)}$ by using formula (\ref{Omegat}), where
$\Omega_{(0)}$ is a skew-symmetric matrix. The final result may be
formulated as follows
\begin{prop}\label{propexpresmu} If a point $x\in M$ is fixed,
there exist global representative $A$-forms $\Xi_{2h-1}\in
\Omega^{4h-3}(A)$ of the characteristic classes $\mu_{2h-1}$ such
that \begin{equation}\label{localmu} \Xi_{2h-1}|_V=
\frac{1}{(2h-2)!}\int_0^1\left(
\delta^{\sigma_1...\sigma_{2h-1}}_{\kappa_1...\kappa_{2h-1}}
\alpha_{\sigma_1}^{\kappa_1}\wedge\Omega_{(\tau),\sigma_2}^{\kappa_2}
\wedge...\wedge\Omega_{(\tau),\sigma_{2h-1}}^{\kappa_{2h-1}}\right)d\tau,
\end{equation} for some neighborhood $V$ of $x$. In
(\ref{localmu}), the factors are the entries of the matrices
$\alpha,\Omega_{(\tau)}$ given by formulas (\ref{expresiiloc2}),
(\ref{expresiiloc3}) and Greek indices run from $1$ to
$dim\,A+dim\,A'$.\end{prop}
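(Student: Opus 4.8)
The plan is to recognize that Proposition \ref{propexpresmu} is essentially a bookkeeping statement: it asserts that a specific local formula represents the characteristic class $\mu_{2h-1}(\varphi)$. The main tool is formula (\ref{defDelta}) specialized to the Chern polynomial $c_{2h-1}$, together with the explicit local matrices (\ref{expresiiloc2}) and (\ref{expresiiloc3}) that were assembled in the paragraphs immediately preceding the statement. So the first step is to justify the \emph{existence} of the global representative: having fixed $x$, I would invoke the chain of three $J_{\rm odd}$-homotopies described in Proposition \ref{invarlaconex} to assert that I may, without changing the cohomology class $\mu_{2h-1}(\varphi)$, replace the connections $\nabla^0,\nabla^1$ by the bump-function-modified connections $\bar\nabla^0,\bar\nabla^1$ built from $\chi$, which agree on $V$ with the local model connections (\ref{conexpeU}) and the orthonormal-frame connection. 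Thus $\Xi_{2h-1}:=\Delta(\nabla^0,\nabla^1)c_{2h-1}$ is a \emph{global} $A$-form (defined by the partition-of-unity-glued connections on all of $M$) that represents $\mu_{2h-1}$ and whose restriction to $V$ is computed purely from the local data.

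Next I would substitute the Chern polynomial into the Bott formula. By definition (\ref{Chern}), $c_{2h-1}(F)=\tfrac{1}{(2h-1)!}\delta^{v_1\dots v_{2h-1}}_{u_1\dots u_{2h-1}}f_{v_1}^{u_1}\cdots f_{v_{2h-1}}^{u_{2h-1}}$, an invariant symmetric $(2h-1)$-multilinear form. Plugging $\phi=c_{2h-1}$ (so $h_{\mathrm{Bott}}=2h-1$) into (\ref{defDelta}) gives
\[
\Delta(\nabla^0,\nabla^1)c_{2h-1}=(2h-1)\int_0^1 c_{2h-1}\bigl(\alpha,\Omega_{(\tau)},\dots,\Omega_{(\tau)}\bigr)\,d\tau,
\]
with $\alpha$ in one slot and $\Omega_{(\tau)}$ in the remaining $2h-2$ slots. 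Expanding the polarized Chern polynomial in this mixed argument and using the symmetry of $c_{2h-1}$ to collect the $\binom{2h-1}{1}=2h-1$ identical terms, the combinatorial prefactor $(2h-1)\cdot\tfrac{1}{(2h-1)!}\cdot(2h-1)$ must be reconciled against the claimed $\tfrac{1}{(2h-2)!}$. The cleanest route is to write out $c_{2h-1}$ via the multi-Kronecker index with one row contracted against $\alpha$ and the rest against $\Omega_{(\tau)}$; the $(2h-1)$ choices of which row carries $\alpha$ are all equal by the antisymmetry of $\delta^{\dots}_{\dots}$ combined with the $\wedge$-commutation of the one- and two-forms, and this collapses $(2h-1)\cdot\tfrac{1}{(2h-1)!}$ to exactly $\tfrac{1}{(2h-2)!}$, matching (\ref{localmu}). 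Restricting to $V$, where $\alpha$ and $\Omega_{(\tau)}$ are the explicit matrices (\ref{expresiiloc2}), (\ref{expresiiloc3}), (\ref{Omegat}) with Greek indices ranging over $1,\dots,\dim A+\dim A'$, yields the stated formula.

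The step I expect to be the main obstacle is the index-combinatorics reconciliation of the prefactors: one must track carefully how the polarization of the Chern polynomial distributes one $\alpha$-slot among $2h-1$ positions and verify, using the total antisymmetry of the multi-Kronecker symbol together with the Koszul sign rules for wedging a $1$-form past $(2h-2)$ two-forms, that every such distribution contributes equally and with the same sign. Everything else — the existence of the representing form, the globality, and the passage to $V$ — is a direct application of results already established, namely Proposition \ref{invarlaconex} (homotopy invariance), the construction of $\bar\nabla,\bar\nabla'$ via the cutoff $\chi$, and formula (\ref{defDelta}).
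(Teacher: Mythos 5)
Your proposal is correct and follows essentially the same route as the paper, whose own proof is just the one-line instruction to substitute the Chern polynomial (\ref{Chern}) and the bump-function-modified connections into (\ref{defDelta}); you have merely made explicit the connection-independence argument and the prefactor computation $(2h-1)\cdot\frac{1}{(2h-1)!}=\frac{1}{(2h-2)!}$, which does come out as claimed (note that the $\delta$-expression for the polarized $c_{2h-1}$ is already symmetric, so the extra factor of $(2h-1)$ you first wrote down never actually appears, as your own second formulation confirms).
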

\begin{proof} Use the expression (\ref{Chern}) of the Chern
polynomials and the connections $\nabla^0,\nabla^1$ constructed
above. \end{proof}

The difficulty in using Proposition \ref{propexpresmu}, besides
its complexity in the case $h>1$, consists in the fact that
formula (\ref{localmu}) does not define global $A$-forms; for
neighborhoods of different points $x_1\neq x_2$ we have different
pairs of distinguished connections $\bar{\nabla},\bar{\nabla}'$.
However, we can use Proposition \ref{propexpresmu} in order to
extend a result proven for a Lie algebroid $A$
($\varphi=\sharp_A$) in
\cite{F}:
\begin{prop}\label{comparmodul}
The secondary class $\mu_1(\varphi)$ is equal to the modular class
of the morphism $\varphi$.\end{prop}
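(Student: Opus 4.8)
The plan is to evaluate $\mu_1(\varphi)=[\Delta(\nabla^0,\nabla^1)c_1]$ explicitly and to recognize the resulting $A$-$1$-cocycle as the canonical representative of the modular class of $\varphi$. Since $c_1=\mathrm{tr}$ and $h=1$, formula (\ref{defDelta}) degenerates (there are no curvature arguments left) and gives $\Delta(\nabla^0,\nabla^1)c_1=\int_0^1\mathrm{tr}(\alpha)\,d\tau=\mathrm{tr}(\alpha)$, where $\alpha$ is the $\tau$-independent difference of the connection matrices of $\nabla^1$ and $\nabla^0$ on $S=A\oplus A^{'*}$. On the neighborhood $V$ the orthogonal connection $\nabla^0$ has the skew-symmetric local matrix (\ref{expresiiloc3}), so $\mathrm{tr}(\omega^0)=0$ there, and therefore $\mu_1(\varphi)=[\mathrm{tr}(\omega^1)]$, the class of the $A$-connection that $\nabla^1=\nabla\oplus\nabla^{'*}$ induces on the determinant line $\det S=\wedge^{\mathrm{top}}A\otimes\wedge^{\mathrm{top}}A^{'*}$.

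First I would note that this induced connection on $\det S$ is flat. Its curvature is $\mathrm{tr}(\Omega^1)=c_1(\Omega^1)$; since $c_1\in I^1=I^{2\cdot 1-1}\subseteq J_{\rm odd}$ while $\nabla^1$ is a $K$-flat quasi-(skew)-metric connection (Proposition \ref{FerinS}), Proposition \ref{theorem2} forces $c_1(\Omega^1)=0$. As $\mathrm{tr}(\omega^1\wedge\omega^1)=0$ automatically, we get $d_A\mathrm{tr}(\omega^1)=\mathrm{tr}(\Omega^1)=0$, so $(\det S,\nabla^1)$ is a flat $A$-line bundle and $\mathrm{tr}(\omega^1)$ is its $d_A$-closed local connection form, whose class in $H^1(A)$ is exactly the characteristic class of this flat line bundle.

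Next I would compute $\mathrm{tr}(\omega^1)$ from (\ref{expresiiloc2}). On $V$, where $\nabla^1$ is realized by the basic connections (\ref{conexpeU}), the $\wedge^{\mathrm{top}}A$-block contributes $\gamma_{ij}^j\,b^{*i}$ (the trace of the $A$-adjoint action $a\mapsto[b_i,a]_A$ from (\ref{propluinabla})), while the $\wedge^{\mathrm{top}}A^{'*}$-block contributes the negative of the trace of $a'\mapsto[\varphi b_i,a']_{A'}$, namely $(-\varphi_i^t\gamma_{ts}^{'s}+\rho_s^{'j}\,\partial\varphi_i^s/\partial x^j)b^{*i}$, so that
$$\mathrm{tr}(\omega^1)(b_i)=\gamma_{ij}^j-\varphi_i^t\gamma_{ts}^{'s}+\rho_s^{'j}\frac{\partial\varphi_i^s}{\partial x^j}.$$
I would then recall (from \cite{{GMM},{KW}}) that the modular class of $\varphi$ is precisely the class in $H^1(A)$ of the canonical $A$-representation on $\wedge^{\mathrm{top}}A\otimes\wedge^{\mathrm{top}}A^{'*}$, combining the $A$-adjoint action on $\wedge^{\mathrm{top}}A$ with the $A'$-adjoint action of $\varphi a$ on $\wedge^{\mathrm{top}}A^{'*}$; its local cocycle is exactly the displayed expression. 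This matching yields $\mu_1(\varphi)=\mathrm{mod}(\varphi)$. As a consistency check, for $\varphi=\sharp_A$ and $A'=TM$ the line $\det S$ becomes $\wedge^{\mathrm{top}}A\otimes\wedge^{\mathrm{top}}T^*M$ and the formula reduces to Fernandes' representative of the modular class of $A$.

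The main obstacle I expect is bridging the two definitions at the level of cocycles rather than merely of classes. The distinguished condition (\ref{eqpropluinabla'}) determines $\nabla'$ only modulo $\mathrm{im}\,\varphi$, so a priori the fiberwise trace of $a'\mapsto[\varphi b_i,a']_{A'}$ on $\wedge^{\mathrm{top}}A'$ need not be canonically fixed; I would resolve this by working with the specific basic connection (\ref{conexpeU}), which realizes the honest $A'$-adjoint action and has a well-defined trace, and by invoking Proposition \ref{invarlaconex} to guarantee that the resulting class $\mu_1(\varphi)$ is independent of the chosen distinguished pair. The only remaining care is the routine matching of the sign and duality conventions in the definition of $\mathrm{mod}(\varphi)$ with those used for $\nabla^{'*}$ and for the fiber-integration sign $(-1)^{[(k+1)/2]}$ in (\ref{Bottmare}).
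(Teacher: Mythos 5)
Your proposal is correct and follows essentially the same route as the paper: both reduce $\mu_1(\varphi)$ to the trace of the connection-difference matrix $\alpha$, kill the $\nabla^0$-contribution by skew-symmetry of (\ref{expresiiloc3}), and identify the trace of (\ref{expresiiloc2}) with the modular cocycle $\lambda-\varphi^*\lambda'$ (your determinant-line description of $\mathrm{mod}(\varphi)$ as the class of the canonical representation on $\wedge^{\mathrm{top}}A\otimes\wedge^{\mathrm{top}}A^{'*}$ is equivalent to the paper's $\mu(A)-\varphi^*\mu(A')$, the two local expressions agreeing precisely because $\sharp_A=\sharp_{A'}\circ\varphi$ gives $\rho_i^j=\varphi_i^s\rho_s^{'j}$). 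The one step you treat more loosely is the globalization: since (\ref{conexpeU}) is only local and different points require different distinguished pairs, the paper glues the local pairs by a partition of unity and uses the affine dependence of $\mathrm{tr}(\alpha)$ on the connections to produce a single global representative equal to $\lambda-\varphi^*\lambda'$ everywhere, whereas Proposition \ref{invarlaconex} by itself only gives independence of the class on the choice of pair, not directly that a class which matches the modular cocycle near each point (for point-dependent choices) equals $[\lambda-\varphi^*\lambda']$ globally.
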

\begin{proof} Recall that the modular
class of a morphism is defined by
$\mu(\varphi)=\mu(A)-\varphi^*\mu(A')\in H^1(A)$, where
$\mu(A),\mu(A')$ are the modular classes of the Lie algebroids
$A,A'$, respectively, \cite{{GMM},{KW},{KGW}}. Furthermore, the
modular class $\mu(A)$ is defined as follows
\cite{{ELW},{F},{H},{KGW}}. The line bundle
$\wedge^sA\otimes\wedge^mT^*M$ $(s=rank\,A)$ has a flat
$A$-connection defined, by means of local bases, as follows
\begin{equation}\label{flattop}
\nabla_{b_i}((\wedge_{j=1}^sb_j)\otimes(\wedge_{h=1}^mdx^h))
=\sum_{j=1}^sb_1\wedge...\wedge [b_i,b_j]_A\wedge...\wedge
b_s\otimes(\wedge_{h=1}^mdx^h)\end{equation}
$$+(\wedge_{j=1}^sb_j)\otimes
L_{\sharp_Ab_i}(\wedge_{h=1}^mdx^h),$$ where $L$ is the Lie
derivative. Then, for
$\sigma\in\Gamma(\wedge^sA\otimes\wedge^mT^*M)$ (which exists if
the line bundle is trivial; otherwise we go to its double
covering), one has $\nabla_a\sigma=\lambda(a)\sigma$ where
$\lambda$ is a $d_A$-closed $1$-$A$-form and defines the
cohomology class $\mu(A)$, which is independent on the choice of
$\sigma$.

From (\ref{flattop}) it follows easily that $\mu(A),\mu(A')$ are
represented by the $A$-forms
\begin{equation}\label{formemodulare}
\lambda=\sum_{i,k,j}(\gamma_{ik}^k+\frac{\partial\rho_i^j}{\partial
x^j})b^{*i},\,
\lambda'=\sum_{s,t,h}(\gamma_{st}^t+\frac{\partial\rho_s^{'j}}{\partial
x^j})b^{'*s}\end{equation} where the notation is that of
(\ref{expresiiloc1}). Notice that, even though the expressions
(\ref{formemodulare}) are local, the forms $\lambda,\lambda'$ are
global $A$-forms because the connection that was used in their
definition is global.

On the other hand, using formulas (\ref{expresiiloc2}),
(\ref{expresiiloc3}) and since the trace of a skew-symmetric
matrix is zero, we may see that the $A$-form $\Xi_1$ defined in
Proposition \ref{propexpresmu} is such that
$\Xi|_{1V}=(\lambda-\varphi^*\lambda')|_{V}$, where $V$ is a
neighborhood of a fixed point $x\in M$. Accordingly, there exists
a locally finite, open covering $\{V_\alpha\}$ of $M$ and there
exists a family of pairs of $A$-connections
$(\nabla^{0\alpha},\nabla^{1\alpha})$ that provide representative
$1$-$A$-forms $\Xi_{1\alpha}$ of the characteristic class
$\mu_1(\varphi)$ such that
\begin{equation}\label{auxmodular}
\Xi_{1\alpha}|_{V_\alpha}=(\lambda-\varphi^*\lambda')|_{V_\alpha}.\end{equation}
Then, if we take a partition of unity $\{\theta_\alpha\in
C^\infty(M)\}$ subordinated to $\{V_\alpha\}$ and glue up the
families $\nabla^{0\alpha},\nabla^{1\alpha}$, like in
(\ref{lipireconex}), we get connections $\nabla^0,\nabla^1$ that
define the representative $A$-form
$$\Xi_1(x)=\sum_{x\in
V_\alpha}\theta_\alpha(x)\Xi_{1\alpha}(x)=(\lambda-\varphi^*\lambda')(x),
\hspace{2mm}x\in M$$ of $\mu_1(\varphi)$.
This justifies the required conclusion.\end{proof}
\begin{example}\label{Ps-Nij} {\rm  An
interesting example appears on a Poisson-Nijenhuis manifold
$(M,P,N)$, where $P$ is a Poisson bivector field and $N$ is a
Nijenhuis tensor. Then $^tN:(T^*M,N\circ\sharp_P)
\rightarrow (T^*M,\sharp_P)$ is a morphism of cotangent Lie
algebroids. The modular class of the morphism $^tN$ was studied in
\cite{DF} and it would be interesting to get information about
other characteristic classes of this morphism.}\end{example}

The calculation of the classes $\mu_{2h-1}$ for $h>1$ is much more
complicated. One of the difficulties is the absence of a global
construction of a distinguished pair of connections.
\begin{example}\label{exdistins} {\rm Let $\varphi:A\rightarrow A$ be
an endomorphism of the Lie algebroid $A$ and assume that there
exists an $A$-connection $\nabla$ on $A$ that satisfies condition
(\ref{propluinabla}) and whose torsion
$$T_\nabla(a_1,a_2)=\nabla_{a_1}a_2-\nabla_{a_2}a_1-[a_1,a_2]_A,\hspace{2mm}
a_1,a_2\in\Gamma A,$$ takes values in $K=ker\,\varphi$. Then, it
is easy to check that the formula
$$\nabla'_{a_1}a_2=[\varphi a_1,a_2]_A + \varphi\nabla_{a_2}a_1$$
defines a second $A$-connection that is $\varphi$-compatible with
$\nabla$ and satisfies condition (\ref{eqpropluinabla'}).
Therefore, $(\nabla,\nabla')$ is a distinguished
pair.}\end{example}

Another difficulty is produced by the complicated character of the
expression (\ref{localmu}). A simple example follows.
\begin{example}\label{exhmare} {\rm If the Lie algebroids
$A,A'$ have anchors zero, the $A$-connections are tensors and
formula (\ref{expresiiloc2}) gives the local connection matrices
of a global, flat $A$-connection $\nabla^1$ as required in the
definition of the characteristic classes (flatness is just Jacobi
identity). In the simplest case
$A=M\times\mathcal{G},A'=M\times\mathcal{G}'$ where
$\mathcal{G},\mathcal{G}'$ are Lie algebras, we may take
$\varpi_i^j=0$ in (\ref{expresiiloc3}), which gives a flat metric
connection $\nabla^0$. Then, formula (\ref{Omegat}) reduces to
$$\Omega_{(\tau)}=\tau(1-\tau)\alpha\wedge\alpha$$ where $\alpha$
is the matrix (\ref{expresiiloc2}). Accordingly, like in \cite{V},
Theorem 4.5.11, we get the representative $A$-forms
$$\Xi_{2h-1}=\frac{1}{(2h-2)!}\nu_h
\delta^{\sigma_1...\sigma_{2h-1}}_{\kappa_1...\kappa_{2h-1}}
\alpha_{\sigma_1}^{\kappa_1}\wedge\alpha_{\sigma_2}^{\lambda_2}
\wedge\alpha_{\lambda_2}^{\kappa_2}\wedge...\wedge
\alpha_{\sigma_2h-1}^{\lambda_{2h-1}}
\wedge\alpha_{\lambda_{2h-1}}^{\kappa_{2h-1}}$$ of the classes
$\mu_{2h-1}$, where $\alpha^{\cdot}_{.}$ are the entries of the
matrix (\ref{expresiiloc2}) and
$$\nu_h=\int_0^1\tau(1-\tau)d\tau=\sum_{i=1}^{2h-2}(-1)^{h+i+1}\frac{2^i}{4h-i-3}
\left(\begin{array}{c}2h-2\vspace{2mm}\\ i\end{array}\right).$$
}\end{example}
\begin{rem}\label{bazediferite} {\rm So far, we do not have
a good definition of characteristic classes of a morphism between
Lie algebroids over different bases. Using the terminology and
notation of
\cite{KGW}, let us
consider a morphism \begin{equation}\label{genmorf}
\begin{array}{ccc} A&
\stackrel{\varphi}{\rightarrow}&B\vspace{2mm}\\ \downarrow&
&\downarrow\vspace{2mm}\\
M&\stackrel{f}{\rightarrow}&N\end{array}\end{equation} between the
Lie algebroids $A,B$ and assume that the mapping $f$ is
transversal to the Lie algebroid $B$. Then, Proposition 3.11 of
\cite{KGW} tells us that $\varphi=f_B^{!!}\circ\varphi'$, where
$f_B^{!!}:f^{!!}B\rightarrow B$, $\varphi':A\rightarrow f^{!!}B$
are the natural projections of the pullback Lie algebroid
$f^{!!}B$. Furthermore, Proposition 3.12 of \cite{KGW}  tells that
the modular class of the non base preserving morphism $\varphi$ is
equal to the modular class of the base preserving morphism
$\varphi'$. This equality may be extended by definition to all the
characteristic classes of $\varphi$, but it is not clear whether
this definition is good (it does not loose information about
$\varphi$) even in the indicated particular case.}\end{rem}
\section{Relative characteristic classes}
From Proposition \ref{comparmodul} and a known result on modular
classes (\cite{KGW}, formula (2.5)) we see that the first class
$\mu_1(\varphi)$ has a nice behavior with respect to the
composition of morphisms namely, for the morphisms
$\varphi:A\rightarrow A',\psi:A'\rightarrow A''$ one has
\begin{equation}\label{modcompus} \mu_1(\psi\circ\varphi)=
\mu_1(\varphi)+\varphi^*(\mu_1(\psi)).\end{equation} In this section
we give a proof of (\ref{modcompus}) by means of the definition of
the characteristic classes of a morphism and we shall see why the
result does not extend to the higher classes $\mu_{2h-1}$, $h>1$.
The proof will use a kind of relative characteristic classes that
are interesting in their own right; in particular, we will show
that the relative classes defined by the jet Lie algebroid $J^1A$
\cite{CF} are cohomological images of the absolute characteristic
classes of a morphism $\varphi:A\rightarrow A'$.

Like in the definition of the characteristic classes of $\varphi$
we can produce characteristic classes of $\psi:A'\rightarrow A''$
modulo $\varphi:A\rightarrow A'$ as follows. Take the Lehmann
morphism $\rho^*(D^0,D^1)$ for an orthogonal $A$-connection $D^0$
on the vector bundle $A'\oplus A^{''*}$ associated with a sum of
Euclidean metrics $g_{A'},g_{A''}$ and an $A$-connection $D^1$ on
$A'\oplus A^{''*}$, which is the sum of {\it distinguished
$A$-connections} $\nabla',\nabla''$ on $A',A''$, respectively.
Here by a distinguished pair we mean a pair of $A$-connections
$(\nabla',\nabla'')$ that satisfies the following properties
\begin{equation}\label{distins2} \begin{array}{l}
\psi\nabla'_aa'=\nabla''_a(\psi a'),\;\;a\in A_x\,(x\in M),\,
a'\in\Gamma A',\vspace{2mm}\\
\nabla'_ak(x)=[\varphi\tilde{a},k]_{A'}(x),\;\;k\in\Gamma ker\,\psi,
\vspace{2mm}\\
\nabla''_aa''(x)=[\psi\varphi\tilde{a},a'']_{A''}(x)\;\;({\rm
mod.\,}im\,\psi),\end{array}\end{equation} where the sign tilde
denotes the extension to a cross section. One can construct a
$\psi$-distinguished pair of $A$-connections $\nabla',\nabla''$ by
replacing the local formulas (\ref{conexpeU}) by
\begin{equation}\label{conexpeU1} \nabla^{'U}_{b_i}b'_{j'}
=[\varphi b_i,b'_{j'}]_{A'},\;
\nabla^{''U}_{b_i}b''_{j''}=[\psi\varphi b_i,b''_{j''}]_{A''},
\end{equation}
then gluing the local connections via a partition of unity. (In
(\ref{conexpeU1}) $(b_i),(b'_{i'}),(b''_{i''})$ are local bases of
$\Gamma A,\Gamma A',\Gamma A''$, respectively.)
\begin{defin}\label{defclrel} {\rm The
characteristic $A$-cohomology classes in $im\,\rho^*(D^0,D^1)$
will be called {\it relative characteristic classes} of $\psi$
modulo $\varphi$. In particular, $$\mu_{2h-1}(\psi\,{\rm
mod.}\,\varphi) =[\Delta(D^0,D^1)]\in H^{4h-3}(A)$$ are the {\it
simple relative characteristic classes}.}\end{defin}
\begin{prop}\label{1relativ} For $h=1$, the relative and absolute
characteristic class $\mu_1$ of the morphism $\psi$ are related by
the equality \begin{equation}\label{pullbackmu1}
\mu_{1}(\psi\,{\rm mod.}\,\varphi)
=\varphi^*\mu_{1}(\psi).\end{equation}
\end{prop}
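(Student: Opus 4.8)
The plan is to compare the local data defining the relative simple class $\mu_1(\psi\,\mathrm{mod.}\,\varphi)$ with the pullback under $\varphi$ of the local data defining the absolute class $\mu_1(\psi)$, and to show that the two representative $A$-forms agree on a neighborhood of each point, then glue by a partition of unity exactly as in the proof of Proposition \ref{comparmodul}. First I would write out the connection matrices of $D^0$ and $D^1$ on $A'\oplus A^{''*}$ in the style of formulas (\ref{expresiiloc2}) and (\ref{expresiiloc3}): the orthogonal part $D^0$ contributes skew-symmetric blocks whose trace vanishes, while $D^1$, built from the local formulas (\ref{conexpeU1}), contributes on the $A'$-block the structure functions $\gamma'^{\,j'}_{i j'}$ of $A'$ (traced over $j'$) evaluated along the anchored directions $\varphi b_i$, and on the $A^{''*}$-block the corresponding expression coming from $[\psi\varphi b_i, b''_{j''}]_{A''}$.

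Then I would invoke Proposition \ref{propexpresmu} specialized to $h=1$: the simple class is represented locally by the trace of the difference matrix $\alpha = (\text{matrix of }D^1) - (\text{matrix of }D^0)$, and since the $D^0$-contribution is traceless, $\Xi_1$ reduces to the trace of the $D^1$-matrix. The key computation is that this trace is precisely $\varphi^*(\lambda'-\psi^*\lambda'')$, where $\lambda',\lambda''$ are the modular representatives (\ref{formemodulare}) for $A',A''$: indeed, the $A'$-block trace produces $\sum(\gamma'^{\,j'}_{ij'}+\partial\rho'^{\,h}_i/\partial x^h)$ but evaluated in the $\varphi b_i$ direction, i.e. pulled back by $\varphi$, and similarly for the $A''$-block with the composite $\psi\varphi$. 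By the definition $\mu(\psi)=\mu(A')-\psi^*\mu(A'')$ recalled in Proposition \ref{comparmodul}, this local form is exactly $\varphi^*$ applied to the modular representative of $\psi$, which by Proposition \ref{comparmodul} itself represents $\mu_1(\psi)$.

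Finally I would glue: over a locally finite cover $\{V_\beta\}$ there are pairs $(D^{0\beta},D^{1\beta})$ giving representatives $\Xi_{1\beta}$ with $\Xi_{1\beta}|_{V_\beta}=\varphi^*(\lambda'-\psi^*\lambda'')|_{V_\beta}$; a subordinate partition of unity $\{\theta_\beta\}$ combines these, as in (\ref{lipireconex}), into a global form equal to $\varphi^*(\lambda'-\psi^*\lambda')$ on all of $M$, which represents $\varphi^*\mu_1(\psi)$.

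The main obstacle I anticipate is the bookkeeping in the trace computation, namely verifying that the anchor-derivative terms $\partial\rho'/\partial x$ and $\partial\rho''/\partial x$ assemble correctly under $\varphi$ and $\psi\varphi$ so that the local form is genuinely $\varphi^*$ of the modular representative of $\psi$ rather than some nearby expression; one must be careful that the orthogonal block $D^0$ is built from metrics on $A',A''$ (pulled into $S=A'\oplus A^{''*}$) and contributes nothing to the trace, and that the passage from covariant derivative to bracket in (\ref{conexpeU1}) is legitimate on $\ker\psi$ and modulo $\operatorname{im}\psi$ exactly as guaranteed by the distinguished conditions (\ref{distins2}). Once the $h=1$ trace identity is pinned down, the gluing argument is a verbatim repetition of the one already carried out for Proposition \ref{comparmodul}.
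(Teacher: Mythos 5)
Your proposal is correct, but it takes a different route from the paper's. The paper does not use the connections (\ref{conexpeU1}) for $D^1$; instead it starts from the distinguished $A'$-connections (\ref{conexpeU2}) that compute the \emph{absolute} class $\mu_1(\psi)$, pulls them back along $\varphi$ to $A$-connections $\tilde{\nabla}^{'U}_{b_i}=\bar{\nabla}^{'U}_{\varphi b_i}$, checks that these induced connections still satisfy (\ref{distins2}), and then observes that their connection matrices are $\varphi^*$ of the original ones up to correction terms $-\langle d_{A'}\varphi_i^{k'},\cdot\rangle b^{*i}$ which cancel in the trace because $\psi$ is a morphism; thus $\Delta(D^0,D^1)c_1=\varphi^*\Delta(\bar{\nabla}^0,\bar{\nabla}^1)c_1$ directly, with no appeal to modular forms. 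You instead keep the standard connections (\ref{conexpeU1}) (which differ from the induced ones precisely by those correction terms), compute the trace explicitly in structure functions, identify it with $\varphi^*(\lambda'-\psi^*\lambda'')$, and then invoke Proposition \ref{comparmodul} to recognize $\lambda'-\psi^*\lambda''$ as a representative of $\mu_1(\psi)$. Both work; the paper's argument is cleaner in that it exhibits the pullback at the level of the Lehmann data (which is also what powers Proposition \ref{propCF}), while yours is more computational but reuses machinery already established. One caution on your sketch: the block-by-block identification you state is not literally true. The trace of the $A'$-block of the $D^1$-matrix is $\varphi_i^{i'}\gamma^{'j'}_{i'j'}b^{*i}-\rho^{'h}_{j'}\,\partial_h\varphi_i^{j'}\,b^{*i}$, which equals $\varphi^*\lambda'-\partial_h\rho_i^h\,b^{*i}$ only after using $\rho_i^h=\varphi_i^{j'}\rho^{'h}_{j'}$; the leftover $-\partial_h\rho_i^h\,b^{*i}$ cancels against the matching term $+\partial_h\rho_i^h\,b^{*i}$ coming from the $A^{''*}$-block via $\rho_i^h=(\psi\varphi)_i^{j''}\rho^{''h}_{j''}$. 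So the identity holds only for the sum of the two blocks and requires both anchor compatibilities, exactly as in the proof of Proposition \ref{comparmodul}; since you flag this as the point needing care, and the cancellation does go through, there is no gap, only a detail left to verify. (Also, in your final gluing sentence the global form should read $\varphi^*(\lambda'-\psi^*\lambda'')$, a typo.)
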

\begin{proof} By absolute classes we understand characteristic
classes $\mu_{2h-1}(\psi) \in H^{4h-3}(A')$. The partition of
unity argument given for (\ref{expresiiloc2}) shows that we may
assume the following local expressions of distinguished
$A'$-connections on $A',A''$
\begin{equation}\label{conexpeU2} \bar{\nabla}^{'U}_{b'_{i'}}b'_{j'}
=[b'_{i'},b'_{j'}]_{A'},\;
\bar{\nabla}^{''U}_{b'_{i'}}b''_{j''}=[\psi b'_{i'},b''_{j''}]_{A''}.
\end{equation}
Connections (\ref{conexpeU2}) induce $A$-connections
$\tilde{\nabla}',\tilde{\nabla}''$ and we shall compute the local
matrices of the induced connections. By definition, we have
$$\tilde{\nabla}^{'U}_{b_i}b'_{j'}=
\bar{\nabla}^{'U}_{\varphi b_i}b'_{j'},\;
\tilde{\nabla}^{''U}_{b_i}b''_{j''}=
\bar{\nabla}^{''U}_{\varphi b_i}b''_{j''}$$ and it is easy to
check that the $A$-connections
$\tilde{\nabla}^{'U},\tilde{\nabla}^{''U}$ satisfy conditions
(\ref{distins2}). Therefore,
$\tilde{\nabla}^{'U},\tilde{\nabla}^{''U}$ may be used in the
calculation of the relative characteristic classes of $\psi$ mod.
$\varphi$. If we denote $\varphi b_i=\varphi_i^{j'}b'_{j'}$ and
use expressions (\ref{conexpeU2}) and the properties of the Lie
algebroid brackets we obtain the local connection matrices
\begin{equation}\label{conexinduse} \begin{array}{l}
\tilde{\omega}^{'k'}_{j'}=
\varphi^*\bar{\omega}^{'k'}_{j'}-<d_{A'}\varphi_i^{k'},b_{j'}>b^{*i},\vspace{2mm}\\
\tilde{\omega}^{''k''}_{j''}=
\varphi^*\bar{\omega}^{''k''}_{j''}-<d_{A''}\varphi_i^{k'},\psi b_{j'}>b^{*i}.
\end{array}\end{equation}
Formula (\ref{conexinduse}) allows us to write down the local
connection matrix of the connection
$D^1=\tilde{\nabla}'+\tilde{\nabla}^{''*}$ required by the
definition of the relative classes. Furthermore, we may assume
that the local matrix of the orthogonal connection $D^0$ that we
use is skew-symmetric. Accordingly, and since $\psi$ is a Lie
algebroid morphism, (\ref{conexinduse}) yields
$$
\Delta(D^0,D^1)c_1=tr\,\left(\begin{array}{cc}
\tilde{\omega}^{'k'}_{j'}&0\vspace{2mm}\\0&
-\tilde{\omega}^{''j''}_{k''}\end{array}\right) =\varphi^*
tr\,\left(\begin{array}{cc}
\bar{\omega}^{'k'}_{j'}&0\vspace{2mm}\\0&
-\bar{\omega}^{''j''}_{k''}\end{array}\right)=
\varphi^*\Delta(\bar{\nabla}^0,\bar{\nabla}^1)c_1,$$
where $\bar{\nabla}^1=\bar{\nabla}'+\bar{\nabla}^{''*}$ and
$\bar{\nabla}^0$ is an orthogonal $A'$-connection on $A'\oplus
A^{''*}$. This result justifies (\ref{pullbackmu1}).\end{proof}
\begin{prop}\label{2relativ}
For $h=1$, the relative and absolute characteristic class $\mu_1$
of the morphisms $\varphi,\psi$ are related by the equality
\begin{equation}\label{eqKGW} \mu_1(\psi\circ\varphi)=
\mu_1(\varphi)+\mu_1(\psi\,{\rm mod.}\,\varphi).\end{equation}\end{prop}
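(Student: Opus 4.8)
The plan is to prove the additivity formula $\mu_1(\psi\circ\varphi)=\mu_1(\varphi)+\mu_1(\psi\,{\rm mod.}\,\varphi)$ by representing all three classes by explicit global $1$-$A$-forms and checking the identity at the level of forms. Since $\mu_1$ is the modular class by Proposition~\ref{comparmodul}, each summand has a clean local expression built from structure constants and anchors.

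\medskip
\noindent First I would recall from the proof of Proposition~\ref{comparmodul} that $\mu_1(\varphi)$ is represented by $\lambda_A-\varphi^*\lambda_{A'}$, where $\lambda_A,\lambda_{A'}$ are the modular forms~(\ref{formemodulare}) of $A,A'$. By the same computation applied to the composite morphism $\psi\circ\varphi:A\rightarrow A''$, the class $\mu_1(\psi\circ\varphi)$ is represented by $\lambda_A-(\psi\circ\varphi)^*\lambda_{A''}=\lambda_A-\varphi^*\psi^*\lambda_{A''}$. Next I would treat the relative class: using the distinguished $A$-connections~(\ref{conexpeU1}) on $A'\oplus A^{''*}$, a trace computation entirely parallel to the one in Proposition~\ref{propexpresmu} and Proposition~\ref{comparmodul} should show that $\mu_1(\psi\,{\rm mod.}\,\varphi)$ is represented locally by $\varphi^*\lambda_{A'}-(\psi\circ\varphi)^*\lambda_{A''}$, i.e.\ by $\varphi^*(\lambda_{A'}-\psi^*\lambda_{A''})$. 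Indeed, Proposition~\ref{1relativ} already gives $\mu_1(\psi\,{\rm mod.}\,\varphi)=\varphi^*\mu_1(\psi)$, and $\mu_1(\psi)=[\lambda_{A'}-\psi^*\lambda_{A''}]$ is the absolute class of $\psi:A'\rightarrow A''$, so the relative form is the $\varphi$-pullback of the modular form of $\psi$.

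\medskip
\noindent Then I would simply add the two local forms:
$$
(\lambda_A-\varphi^*\lambda_{A'})+\varphi^*(\lambda_{A'}-\psi^*\lambda_{A''})
=\lambda_A-\varphi^*\psi^*\lambda_{A''},
$$
where the $\varphi^*\lambda_{A'}$ terms cancel. This is exactly the representative of $\mu_1(\psi\circ\varphi)$ identified above, so the three cohomology classes satisfy~(\ref{eqKGW}). The cancellation is the whole point: it mirrors the telescoping in formula~(\ref{claseperechi}) and in the modular-class cocycle identity~(\ref{modcompus}).

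\medskip
\noindent The main obstacle will be verifying that the relative class is genuinely represented by the global form $\varphi^*(\lambda_{A'}-\psi^*\lambda_{A''})$, rather than merely agreeing with it on one coordinate neighborhood. As in the proof of Proposition~\ref{comparmodul}, the local connection matrices coming from~(\ref{conexpeU1}) depend on the chosen bases near each point, so the naive local expression~(\ref{localmu}) is not global; one must run the partition-of-unity gluing argument (as in~(\ref{lipireconex}) and~(\ref{auxmodular})) to assemble a single global representative $A$-form and confirm that its trace is the claimed combination of modular forms. Because each of $\lambda_A,\lambda_{A'},\lambda_{A''}$ is independently global once a global defining connection is fixed, the cancellation of the $\varphi^*\lambda_{A'}$ terms is legitimate at the level of global forms, and the equality of cohomology classes follows. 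The reason the argument does not extend to $h>1$, which I would flag at the end, is that the higher Chern polynomials $c_{2h-1}$ are not additive under block-diagonal decomposition, so the neat trace cancellation above has no analogue for the transgression forms $\Delta(\nabla^0,\nabla^1)c_{2h-1}$.
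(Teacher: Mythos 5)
Your proof is correct, but it takes a genuinely different route from the paper's. You reduce everything to the modular-class picture: Proposition~\ref{comparmodul} gives $\mu_1(\varphi)=[\lambda_A-\varphi^*\lambda_{A'}]$ and $\mu_1(\psi\circ\varphi)=[\lambda_A-\varphi^*\psi^*\lambda_{A''}]$, Proposition~\ref{1relativ} gives $\mu_1(\psi\,{\rm mod.}\,\varphi)=\varphi^*[\lambda_{A'}-\psi^*\lambda_{A''}]$, and the cancellation of the $\varphi^*\lambda_{A'}$ terms yields (\ref{eqKGW}); your globality worry is already settled because the forms (\ref{formemodulare}) are global, so the cancellation happens at the level of global closed $1$-$A$-forms. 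The paper instead argues directly from the definition of the classes: it computes $\mu_1(\psi\circ\varphi)$ and $\mu_1(\psi\,{\rm mod.}\,\varphi)$ using the distinguished connections (\ref{conexpeU}) and (\ref{conexpeU1}) on a common neighborhood, so that the relevant connection-difference matrices telescope as in (\ref{blocuri}), $\nabla''-\nabla=(\nabla''-\nabla')+(\nabla'-\nabla)$ (an identity of local connection matrices, not of connections); taking traces, with orthogonal connections chosen so their matrices are skew-symmetric and hence traceless, gives (\ref{eqKGW}) without ever invoking Proposition~\ref{comparmodul}. The telescoping is the same in spirit, but it occurs at the level of connection matrices rather than of modular forms. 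Your version is shorter and cleaner; what the paper's version buys is self-containedness, since the author's stated aim in this section is to prove (\ref{modcompus}) ``by means of the definition of the characteristic classes'' rather than by appeal to the modular-class identification -- your argument is not circular (Proposition~\ref{comparmodul} is established independently in the previous section), but it leans on precisely that identification. Your closing explanation of why the argument fails for $h>1$ matches Remark~\ref{obsfinala}.
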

\begin{proof} In
the computation of $\mu_1(\psi\circ\varphi)$ we may use an
$A$-connection $\nabla+\nabla^{''*}$ on $A\oplus A^{''*}$ where,
on the specified neighborhood $U$, $\nabla$ is given by
(\ref{conexpeU}) and $\nabla''$ is given by (\ref{conexpeU1}),
while in the computation of $\mu_{1}(\psi\,{\rm mod.}\,\varphi)$
we shall use the connections $\nabla',\nabla''$ of
(\ref{conexpeU1}). Thus, the non-zero blocks of the local
difference matrix $\alpha$ that enters into the expression of the
representative $1-A$-form of $\mu_1(\psi\circ\varphi)$ are given
by the local matrix of
\begin{equation}\label{blocuri}
\nabla''-\nabla=\nabla''-\nabla'+\nabla'-\nabla\end{equation} and
the opposite of its transposed matrix (in spite of the notation,
calculation (\ref{blocuri}) is for the connection matrices not for
the connections). Then, if we use orthogonal connections of
metrics where the bases used in (\ref{conexpeU1}) are orthonormal
bases (therefore, with trace zero), formula (\ref{blocuri})
justifies (\ref{eqKGW}).\end{proof}
\begin{corol}\label{corolarKGW} The characteristic class $\mu_1$
of a composed morphism $\psi\circ\varphi,\psi$ is given by formula
(\ref{modcompus}).\end{corol} \begin{proof} The result is an
obvious consequence of formulas (\ref{pullbackmu1}) and
(\ref{eqKGW}).\end{proof}
\begin{rem}\label{obsfinala} {\rm Formulas (\ref{modcompus}),
(\ref{pullbackmu1}), do not hold for $h>1$ because of the more
complicated expression of the polynomials $c_{2h-1}$ (there is no
nice formula for the determinant of a sum of matrices).}\end{rem}

We finish by showing the relation between the characteristic
classes of the base-preserving Lie algebroid morphism
$\varphi:A\rightarrow A'$ and the relative classes defined by the
first jet Lie algebroid $J^1A$; for $\varphi=\sharp_A:A\rightarrow
TM$ this relation was established in \cite{CF}.

The first jet bundle $J^1A$ may be defined as follows. Let $D$ be
a $TM$-connection on $A$ and let $Da$ denote the covariant
differential of a cross section $a\in\Gamma A$ (i.e.,
$Da(X)=D_Xa$, $X\in\Gamma TM$). The properties of a connection
tell us that $Da\in Hom(TM,A)$ and, if $a(x_0)=0$ for some point
$x_0\in M$, then $Da(x_0):T_{x_0}M\rightarrow A_{x_0}$ is a linear
mapping that is independent of the choice of the connection $D$.
(This is not true if $a(x_0)\neq0$.) If $(x^h)$ are local
coordinates of $M$ around $x_0$ and $(b_i)$ is a local basis of
$\Gamma A$, and if $a=\xi^i(x^h)b_i$, the local matrix of
$Da(x_0)$ is $(D_h\xi^i(x_0))$ (the covariant derivative tensor),
which reduces to $(\partial\xi^i/\partial x^h(x_0))$ if
$\xi^i(x_0)=0$.

Now, for any point $x_0\in M$, the space of $1$-jets of cross
sections of $A$ at $x_0$ is
\begin{equation}\label{spjetx}
J^1_{x_0}A=\Gamma A/\{a\in\Gamma
A\,/\,a(x_0)=0,Da(x_0)=0\}\end{equation} and each $a\in\Gamma A$
defines an element $j_x^1a\in J^1_{x_0}A$ called the $1$-jet of
$a$ at $x_0$. With the local coordinates and basis considered
above, we may write
$$a=\xi^i(x^h)b_i=(\xi^i(x_0)+\frac{\partial\xi^i}{\partial
x^h}(x_0)(x^h-x^h(x_0))+o((x^h-x^h(x_0))^2))b_i.$$ Hence,
$$j_{x_0}^1a=\xi^i(x_0)j_{x_0}^1b_i+
\frac{\partial\xi^i}{\partial x^h}(x_0)j_{x_0}^1((x^h-x^h(x_0))b_i)$$ and
\begin{equation}\label{bazejet}
j_{x_0}^1b_i,\,j_{x_0}^1((x^h-x^h(x_0))b_i)=j_{x_0}^1
(x^hb_i)-x^h(x_0)j_{x_0}^1b_i\end{equation} is a basis of the
vector space $J^1_{x_0}A$ such that
$(\xi^i(x_0),\partial\xi^i/\partial x^h(x)(x_0))$ are coordinates
with respect to this basis.

A change of the local coordinates and basis of $A$ gives the
transition formulas
\begin{equation}\label{schimbcoordjet}
\tilde{x}^h=\tilde{x}^h(x^k),\,\tilde{\xi}^i=\lambda^i_j(x^k)\xi^j,\,
\frac{\partial\tilde{\xi}^i}{\partial\tilde{x}^h}=
\frac{\partial{x}^k}{\partial\tilde{x}^h}\left(
\frac{\partial\lambda_j^i}{\partial{x}^k}\xi^j+\lambda_j^i
\frac{\partial{\xi}^j}{\partial{x}^k}\right)\end{equation}
and may be seen as the composition of the change of the
coordinates $(x^h)$ with the change of the basis $(b_i)$, while
the order of the two changes is irrelevant. This remark allows for
an easy verification of the fact that the change of the
coordinates discovered above in $J^1_{x_0}A$ has the cocycle
property. Accordingly, (\ref{schimbcoordjet}) shows that
$J^1A=\cup_{x\in M}J^1_xA$ has a natural structure of a
differentiable manifold and vector bundle $\pi:J^1A\rightarrow M$
over $M$ called the first jet bundle of $A$.

From (\ref{bazejet}), we see that $(j^1b_i,j^1(x^hb_i))$ is a
local basis of cross sections of $J^1A$ at each point of the
coordinate neighborhood where $x^h$ are defined. This basis
consists of $1$-jets of local cross sections of $A$, therefore,
the cross sections of $J^1A$ are locally spanned by $1$-jets of
cross sections of $A$ over $C^\infty(M)$. In the case of a Lie
algebroid $A$, the previous remark allows us to define a Lie
algebroid structure on $J^1A$ by putting
\begin{equation}\label{strluiJ1A} \sharp_{J^1A}(j^1a)=\sharp_Aa,\;
[j^1a_1,j^1a_2]_{J^1A}=j^1[a_1,a_2]_A\end{equation} and by
extending the bracket to general cross sections via the axioms of
a Lie algebroid. We refer the reader to Crainic and Fernandes
\cite{CF} for details. A general, global expression of the Lie
algebroid bracket of $J^1A$ was given by Blaom \cite{B}.

Moreover, (\ref{strluiJ1A}) shows that the natural projection
$\pi^1:J^1A\rightarrow A$, $\pi^1(j^1a)=a$ is a base-preserving
morphism of Lie algebroids and, if $\varphi:A\rightarrow A'$ is a
morphism of Lie algebroids, we may define relative characteristic
classes of $\varphi$ modulo $\pi^1$. Following \cite{CF}, there
exist flat $J^1A$-connections $\nabla^{j^1},\nabla^{'j^1}$ on
$A,A'$, respectively, given by
\begin{equation}\label{Jconex}
\nabla^{j^1}_{fj^1a_1}a_2=f[a_1,a_2]_A,\, \nabla^{'j^1}_{fj^1a}a'=f[\varphi
a,a']_{A'},\end{equation} where $a,a_1,a_2\in\Gamma A, a'\in\Gamma
A',f\in C^\infty(M)$. These connections obviously satisfy
conditions (\ref{distins2}), hence,
$D^1=\nabla^{j^1}+\nabla^{'*j^1}$ is a $J^1A$-connection on
$A\oplus A^{'*}$ that may be used in Definition \ref{defclrel} for
the present case. We shall prove the following result
\begin{prop}\label{propCF} The relative characteristic classes of
$\varphi$ modulo $\pi^1$ are the images of the corresponding
absolute characteristic classes of $\varphi$ by the homomorphism
$\pi^{1*}:H^*(A)\rightarrow H^{*}(J^1A)$.\end{prop}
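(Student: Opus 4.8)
The plan is to prove everything by exploiting the naturality of the construction under the base Lie algebroid morphism $\pi^1:J^1A\to A$ of (\ref{strluiJ1A}). The first step is to introduce the pullback of connections: for any $A$-connection $\nabla$ on a vector bundle $V\to M$ I would set $(\pi^{1*}\nabla)_\xi v=\nabla_{\pi^1\xi}v$ for $\xi\in\Gamma(J^1A)$, $v\in\Gamma V$. Since $\pi^1$ is a bundle map over the identity and $\sharp_{J^1A}=\sharp_A\circ\pi^1$, this is a genuine $J^1A$-connection; and since $\pi^1$ is a base-preserving morphism, $\pi^1[\xi_1,\xi_2]_{J^1A}=[\pi^1\xi_1,\pi^1\xi_2]_A$, so its curvature is the pullback $R_{\pi^{1*}\nabla}(\xi_1,\xi_2)=R_\nabla(\pi^1\xi_1,\pi^1\xi_2)$. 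A direct check shows that $\pi^{1*}$ commutes with duals and direct sums, whence $\pi^{1*}(\nabla\oplus\nabla'^*)=(\pi^{1*}\nabla)\oplus(\pi^{1*}\nabla')^*$, and that it sends orthogonal connections on $S=A\oplus A^{'*}$ to orthogonal ones and $K$-flat quasi-(skew)-metric connections to $K$-flat quasi-(skew)-metric ones, because both conditions are expressed through $\nabla g_\pm=0$ and through the curvature, both of which are preserved under $\pi^{1*}$. In particular Proposition \ref{theorem2} gives $J_{\mathrm{odd}}\subseteq\ker\Delta(\pi^{1*}\nabla^0)$ and $J_{\mathrm{odd}}\subseteq\ker\Delta(\pi^{1*}\nabla^1)$.

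The decisive step is to verify that if $(\nabla,\nabla')$ is a distinguished pair for $\varphi$ in the sense of Definition \ref{defdistins}, then $(\pi^{1*}\nabla,\pi^{1*}\nabla')$ is a distinguished pair for $\varphi$ modulo $\pi^1$ in the sense of (\ref{distins2}). The compatibility $\varphi\circ\pi^{1*}\nabla=(\pi^{1*}\nabla')\circ\varphi$ is immediate from $\varphi\circ\nabla=\nabla'\circ\varphi$. For the two bracket conditions, the key observation is that for $\xi\in(J^1A)_x$ and any extension $\tilde\xi$ of $\xi$, the pushforward $\pi^1\tilde\xi$ is a section of $A$ whose value at $x$ is $(\pi^1\xi)(x)$; hence (\ref{propluinabla}) and (\ref{eqpropluinabla'}) for $(\nabla,\nabla')$, applied with the admissible extension $\pi^1\tilde\xi$, yield exactly $(\pi^{1*}\nabla)_\xi k(x)=[\pi^1\tilde\xi,k]_A(x)$ for $k\in\Gamma(\ker\varphi)$ and $(\pi^{1*}\nabla')_\xi a'(x)=[\varphi\pi^1\tilde\xi,a']_{A'}(x)$ modulo $\mathrm{im}\,\varphi$, which are precisely the conditions (\ref{distins2}) in the present situation.

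Next I would invoke the relative analogue of Proposition \ref{invarlaconex}: the same convexity argument shows that the relative characteristic classes depend neither on the chosen distinguished $J^1A$-pair nor on the orthogonal $J^1A$-connection, since convex combinations of distinguished pairs remain distinguished (the conditions (\ref{distins2}) are affine in the connection) and any two orthogonal connections are $J_{\mathrm{odd}}$-homotopic. Consequently the relative classes of $\varphi$ modulo $\pi^1$, although defined in the text through the flat connections $\nabla^{j^1},\nabla^{'j^1}$ of (\ref{Jconex}), may equally be computed from $D^0=\pi^{1*}\nabla^0$ and $D^1=\pi^{1*}(\nabla\oplus\nabla'^*)$, where $\nabla^0$ and $\nabla^1=\nabla\oplus\nabla'^*$ are the very connections that define the absolute classes $\mu_{2h-1}(\varphi)$.

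It then remains to observe that the Bott homomorphism is natural, $\Delta(\pi^{1*}\nabla^0,\pi^{1*}\nabla^1)\phi=\pi^{1*}(\Delta(\nabla^0,\nabla^1)\phi)$, which I read off term by term from (\ref{defDelta}): under $\pi^{1*}$ the difference matrix $\alpha$ and the curvature $\Omega_{(\tau)}$ of the link become their $\pi^{1*}$-images (using the curvature naturality above), the invariant polynomial $\phi$ is a fixed expression in wedge products of matrix entries and so commutes with the algebra map $\pi^{1*}$, and the fiber integration over $I$ touches only the parameter $\tau$ and hence passes through $\pi^{1*}$; moreover $\pi^{1*}$ intertwines $d_A$ with $d_{J^1A}$ because $\pi^1$ is a morphism. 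Taking cohomology with $\phi=c_{2h-1}$ gives $\mu_{2h-1}(\varphi\ \mathrm{mod.}\ \pi^1)=\pi^{1*}\mu_{2h-1}(\varphi)$, and applying the same argument to the $A$-Pontrjagin classes and to cup-products of all these generators, as catalogued in Proposition \ref{clsecFergen}, yields the statement for every relative characteristic class, since $\pi^{1*}$ is an algebra homomorphism in cohomology. The main obstacle is the middle step: the defining connections $\nabla^{j^1},\nabla^{'j^1}$ are \emph{not} pullbacks of $A$-connections, so one must legitimately replace them by the pullback connections, and this is exactly what the homotopy-invariance argument secures; once that replacement is justified, naturality of $\Delta$ closes the proof.
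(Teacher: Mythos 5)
Your proof is correct and follows essentially the same route as the paper: both replace the defining connections by the pullbacks $(\pi^{1*}\nabla)_\xi=\nabla_{\pi^1\xi}$ of the connections computing the absolute classes of $\varphi$, check that these are admissible (distinguished, respectively orthogonal) for the relative construction, and conclude by naturality of the Bott/Lehmann homomorphism under $\pi^{1*}$. The paper obtains this naturality by specializing the local formulas (\ref{conexinduse}) of Proposition \ref{1relativ} (the correction terms vanish because the components of $\pi^1$ in the jet bases are constant), whereas you argue abstractly and, usefully, make explicit the homotopy-invariance step that legitimizes swapping the flat connections (\ref{Jconex}) for the pullback connections.
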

\begin{proof} Here, we have the particular case of the situation
that existed in Proposition \ref{1relativ} where $\pi^1$ comes
instead of $\varphi$ and $\varphi$ comes instead of $\psi$.
Therefore, we may construct connections that correspond to
(\ref{conexpeU2}) and the induced $J^1A$-connections and get the
corresponding formulas (\ref{conexinduse}). If we use the local
bases (\ref{bazejet}) of $\Gamma J^1A$, the components
$\varphi_i^{k'}$ that appear in (\ref{conexinduse}) are constant
and (\ref{conexinduse}) simply tell us that the local connection
forms of the induced connections are the pullback of the
connection forms of the connections (\ref{conexpeU2}) by $\pi^1$.
Of course, the same holds for the curvature forms, and, if we also
use a $J^1A$-orthogonal connection of $A\oplus A^{'*}$ that is
induced by an $A$-orthogonal connection, we see that $\pi^{1*}$
commutes with the Lehmann morphism, which implies the required
result.\end{proof}
 %\end{center}
\hspace*{7.5cm}{\small \begin{tabular}{l} Department of
Mathematics\\ University of Haifa, Israel\\ E-mail:
vaisman@math.haifa.ac.il \end{tabular}}

\begin{thebibliography}{xx}
\bibitem{B} A. D. Blaom, Geometric structures as deformed
infinitesimal symmetries, Trans. Amer. Math. Soc., 358 (2006),
3651-3671.
\bibitem{Cr} M. Crainic, Differentiable and algebroid cohomology,
Van Est isomorphisms and characteristic classes, Comment. Math.
Helvetici, 78 (2003), 681-721.
\bibitem{CF} M. Crainic and R. L. Fernandes, Exotic Characteristic
Classes of Lie Algebroids, in: Quantum Field Theory and
Noncommutative Geometry, Lect. Notes in Physics, 662,
Springer-Verlag, Berlin, 2005.
\bibitem{DF} P. A. Damianou and R. L. Fernandes,
Integrable hierarchies and the modular class. Ann. Inst. Fourier
(Grenoble) 58 (2008), 107-137.
\bibitem{ELW} S. Evens, J.-H. Lu and A. Weinstein, Transverse measures, the
modular class and a cohomology pairing for Lie algebroids, Quart.
J. Math. Ser. 2, 50 (1999), 417-436.
\bibitem{F} R. L. Fernandes, Lie algebroids, holonomy and
characteristic classes, Adv. Math. 170 (2002), 119-179.
\bibitem{GMM} J. Grabowski, G. Marmo and P. W. Michor, Homology
and modular classes of Lie algebroids, Ann. Inst. Fourier
(Grenoble), 56 (2006), 69-83.
\bibitem{H} J. Huebschmann, Duality for Lie-Reinhart algebras and
the modular class, J. Reine Angew. Math. 510 (1999), 103-159.
\bibitem{KW} Y. Kosmann-Schwarzbach and A. Weinstein, Relative
modular classes of Lie algebroids, C. R. Acad. Sci. Paris, Ser. I
341 (2005), 509-514.
\bibitem{KGW} Y. Kosmann-Schwarzbach, C. Laurent-Gengoux and A. Weinstein,
Modular classes of Lie algebroid morphisms, Transformation Groups,
13 (2008), 727-755.
\bibitem{Kb} J. Kubarski, The Weil algebra and the secondary
characteristic homomorphism of regular Lie algebroids, Lie
Algebroids and Related Topics in Differential Geometry, Banach
Center Publications, 54 (2001), 135-173.
\bibitem{L} D. Lehmann, Classes caract\'eristiques et
$J$-connexit\'e des espaces de connexions, Ann. Inst. Fourier
(Grenoble), 24 (1974), 267-306.
\bibitem{Mk} K. C. H. Mackenzie,
General Theory of Lie Groupoids and Lie Algebroids, London Math.
Soc. Lecture Note Series 213, Cambridge Univ. Press, Cambridge,
2005.
\bibitem{V} I. Vaisman, Symplectic Geometry and Secondary
Characteristic Classes, Progress in Math., 72, Birkh\"auser,
Boston-Basel, 1987.
\end{thebibliography}
\end{document}